\newtheorem{lemma}{Lemma}
\newtheorem{theorem}{Theorem}
\newtheorem*{corollary}{Corollary}
\newtheorem*{facta}{Theorem A}
\newtheorem*{factla}{Lemma A}
\newtheorem*{factlb}{Lemma B}
\newtheorem{proposition}{Proposition}
\theoremstyle{remark}
\newtheorem*{remark}{Remark}
\newcommand{\B}{{\mathbf B}}
\newcommand{\N}{{\mathbf N}}
\newcommand{\R}{{\mathbf R}}
\newcommand{\conv}{{\rm conv}}
\newcommand{\diam}{{\rm diam}}
\newcommand{\beq}{\begin{equation}}
\newcommand{\eeq}{\end{equation}}
\newcommand{\beqs}{\begin{equation*}}
\newcommand{\eeqs}{\end{equation*}}
\newcommand{\ba}{\begin{eqnarray}}
\newcommand{\ea}{\end{eqnarray}}
\newcommand{\bas}{\begin{eqnarray*}}
\newcommand{\eas}{\end{eqnarray*}}
\newcommand{\Bm}{\mathbf B}
\newcommand{\Sm}{S}
\newcommand{\HH}{{\mathcal H}}
\newcommand{\z}{\zeta}
\newcommand{\p}{\phi}
\newcommand{\vp}{\varphi}
\newcommand{\fraction}{\frac}
\begin{document}
\author{Kjersti Solberg Eikrem and Eugenia Malinnikova}
\title
{Radial growth of harmonic functions in the unit ball} 
\address{Department of Mathematical Sciences, Norwegian University of Science and
Technology, NO-7491, Trondheim, Norway}
\email{kjerstei@math.ntnu.no}
\email{eugenia@math.ntnu.no}

\begin{abstract}
Let $\Psi_v$ be the class of harmonic functions in the unit disk or unit ball in $\R^n$ which admit a radial majorant $v(r)$. We prove that when $v$ fulfills a doubling condition, a function in $\Psi_v$ may grow or decay as fast as $v$ only along small sets of radii, and we give precise estimates of these exceptional sets in terms of Hausdorff measures. 
\end{abstract}

\thanks{The second author was supported by the Research
Council of Norway, grant 185359/V30}

\subjclass[2000]{31B25, 31B05}
\keywords{Harmonic functions, boundary values, Hausdorff measure}
\maketitle

\section{Introduction}
Radial behavior of harmonic functions in the unit disk and unit ball in $\R^m$ is a classical topic in analysis. In this article we consider harmonic functions bounded a priori by some radial majorant and discuss their radial growth. 

It follows from a theorem of N. N. Lusin and I. I. Privalov, see \cite{P}, that there exist harmonic functions in the unit disk that tend to infinity along almost each radius. Moreover, a generalization of this result obtained by J.-P. Kahane and Y.~Katznelson \cite{KK}, shows that such functions may be bounded by an arbitrarily slow growing radial majorant.

Let $v(r)$ be a positive increasing continuous function on $[0,1)$ and assume that 
 $\lim_{r\rightarrow 1} v (r) =+\infty $. Let $\B$ be the unit ball in $\R^m$, we define
 \beq \label{1}
 \Phi^m_v=\{u:\B\rightarrow\R, \Delta u=0, u(x)\le Kv(|x|)\},\eeq
and
 \[
 \Psi^m_v=\{u:\B\rightarrow\R, \Delta u=0, |u(x)|\le Kv(|x|)\}.\]

Harmonic functions of the class $\Phi^2_v$ with $v(r)=|\log(1-r)|$ were studied by B. ~Korenblum in \cite{K}. This class as well as more general classes that correspond to $v(r)=|\log(1-r)|^s$ appear in connection with the related 
spaces of analytic functions, see also \cite{Se,BL}.
Radial growth of harmonic functions in the unit disk bounded by a multiple of $|\log(1-r)|$ was studied in \cite{BLMT} and \cite{LM}. The aim of this article is to understand to what extent some of the results in \cite{BLMT} 
remain true for general majorants and  higher dimensional spaces.

We mostly consider functions $v$ that satisfy the following doubling condition
\begin{equation}\label{vdouble}
v(1-d/2)\le D v(1-d).
\end{equation} 
The constants $K $ and $D $ will preserve their identities throughout this article.
 
The main aim of this work is to estimate the size of the set of the radii along which a function from $\Phi^m_v$ or $\Psi^m_v$ grows or decays as fast as the majorant $v(r)$. 
 For each function $u\in\Phi^m_v $ we define subsets of the unit sphere 
 \begin{gather*}
 E^-(u)=\{y\in S: \limsup_{r\rightarrow 1-}\frac{u(ry)}{v(r)}<0\},\\
 E^+(u)=\{y\in S: \liminf_{r\rightarrow 1-}\frac{u(ry)}{v(r)}>0\}.
 \end{gather*}
For every increasing continuous function $\lambda:[0,+\infty)\rightarrow [0,+\infty)$ with $\lambda(0)=0$ we denote by $\mathcal{H}_\lambda$ the corresponding Hausdorff measure.
\begin{theorem}
\label{th:haus}
Let $v$ satisfy (2).

\noindent(a) If $u\in\Phi_v^m$ and $\lambda$ is a continuous increasing function, $\lambda(0)=0$ and 
$$\lambda(t)=o(t^{m-1}v(1-t)^\alpha), \qquad(t\rightarrow 0),$$ for any $\alpha>0$,
then $\HH_\lambda(E^+(u))=\HH_\lambda(E^-(u))=0$.\\
\noindent(b) For any $\beta>0$ there exists $u\in\Psi_v^m$ and an increasing continuous function $\lambda_\beta$, $\lambda_\beta(0)=0$ and $\lambda_\beta(t)=O(t^{m-1}v(1-t)^\beta)\ (t\rightarrow 0)$, such that $\HH_{\lambda_\beta}(E^+(u))>0 $. 
\end{theorem}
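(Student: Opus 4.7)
Since $E^\pm(u)=\bigcup_{c>0}E^\pm_c$, where $E^+_c=\{y:u(ry)\ge cv(r)\text{ for all }r\text{ near }1\}$ (analogously $E^-_c$), by countable subadditivity of $\HH_\lambda$ it suffices to show $\HH_\lambda(E^\pm_c)=0$ for each fixed $c>0$. I would work with dyadic scales $d_n=2^{-n}$, $r_n=1-d_n$, $v_n=v(r_n)$; the doubling (\ref{vdouble}) gives $v_n\le D^n v_0$. The central object is the positive harmonic function $h_n(x)=Kv(r_n)-u(x)\ge 0$ on $B(0,r_n)$, nonnegative because of the one-sided bound $u\le Kv(|x|)\le Kv(r_n)$. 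A Harnack-type \emph{spreading lemma}, proved by applying Harnack's inequality to $h_n$ on an appropriate Euclidean ball near $r_ny$ and invoking (\ref{vdouble}), shows that membership of $y$ in the bad set $F^\pm_n$ at scale $n$ propagates to the whole $\delta d_n$-cap around $y$, with $\delta=\delta(c,K,D)>0$.

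\textbf{Per-scale bound and scale iteration.} For $E^-_c$, the mean-value identity $\int_S u(r_n\cdot)\,d\sigma=u(0)|S|$, combined with $u\le Kv_n$ off $F^-_n$ and $u\le-cv_n$ on $F^-_n$, gives
\[
\sigma(F^-_n)\le \rho_c|S|,\qquad \rho_c=\tfrac{K}{K+c}<1.
\]
The $E^+_c$ case is more delicate since $u$ has no a priori lower bound; I would argue via the Poisson representation of $u$ from $\partial B(0,r_{n+1})$: using $u\le Kv_{n+1}\le KDv_n$ one derives $\int P_{r_{n+1}}(r_ny,\cdot)\,u_-\,d\sigma\le(KD-c)v_n$ for $y\in F^+_n$, then sums over a $\delta d_n$-separated family in $F^+_n$ (provided by the spreading lemma) to obtain $\sigma(F^+_n)\le\rho'_c|S|$ with $\rho'_c<1$. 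The main step is upgrading these per-scale bounds into geometric decay,
\[
\sigma\bigl(F^\pm_{n_0}\cap\cdots\cap F^\pm_{n_0+N}\bigr)\le C\rho^N|S|,
\]
by localizing the per-scale argument to each dyadic $d_n$-cap $Q$: representing $h_n$ via the Poisson integral of $h_{n+1}$ from the tent above $Q$ and invoking (\ref{vdouble}) reproduces the same $\rho$-factor for the $d_{n+1}$-subcaps of $Q$. Iterating $N$ times yields, via the spreading lemma, a cover of $E^\pm_c$ by $\lesssim\rho^N|S|/d_N^{m-1}$ caps of radius $d_N$. Since $v_N\le D^Nv_0$, for any $\alpha<\log(1/\rho)/\log D$ we have $\rho^Nv_N^\alpha\to 0$, so the hypothesis $\lambda(d)=o(d^{m-1}v(1-d)^\alpha)$ forces the cover's $\lambda$-cost to vanish and hence $\HH_\lambda(E^\pm_c)=0$. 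The principal obstacle is the rigorous localization of the single-scale arguments to individual dyadic caps with constants uniform in the scale; the $E^+$ analysis in particular needs delicate Poisson-kernel estimates, since the naive Markov inequality controls only $E^-$.

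\textbf{Part (b).} I would construct $u$ explicitly as a weighted Poisson-type integral. Fix $\beta>0$, a closed Cantor-type set $E\subset S$ of positive $\HH_{\lambda_\beta}$-measure, and a Frostman probability measure $\mu$ on $E$ satisfying $\mu(B(y,t))\le C\lambda_\beta(t)=Ct^{m-1}v(1-t)^\beta$. Define a kernel $K_v(x,\xi)$ concentrated in the $(1-|x|)$-cap around $\xi$ with radial mass $\asymp v(|x|)/\mu\bigl(B(\xi,1-|x|)\bigr)$, and set
\[
u(x)=\int_S K_v(x,\xi)\,d\mu(\xi)-u_0(x),
\]
where $u_0$ is a harmonic correction turning the natural one-sided estimate into the two-sided $|u|\le Kv$ needed for membership in $\Psi^m_v$. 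Concentration of $K_v$ near its diagonal forces $u(r\xi)\gtrsim v(r)$ for $\xi\in E$, so $E\subset E^+(u)$ and $\HH_{\lambda_\beta}(E^+(u))\ge\HH_{\lambda_\beta}(E)>0$; the Frostman decay of $\mu$ matched against the off-diagonal decay of $K_v$ gives $|u(x)|\le Kv(|x|)$ globally. The main obstacle is calibrating $\beta$, the Cantor set's generation parameters, and the radial weight of $K_v$ so that the diagonal lower bound and the global upper bound close with the same constant $K$.
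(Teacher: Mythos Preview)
Your decomposition into the sets $E^\pm_c$ and your spreading lemma match the paper's Lemma~\ref{l:n-cone}. The per-scale estimate $\sigma(F^-_n)\le\rho_c|S|$ via the mean-value identity is correct. The gap is in the iteration step. Upgrading the single bound $\sigma(F^\pm_n)\le\rho|S|$ to geometric decay $\sigma(F^\pm_{n_0}\cap\cdots\cap F^\pm_{n_0+N})\le C\rho^N$ requires that the per-scale argument localize to each cap $Q$ with a uniform constant. But the mean-value identity you used is global (centered at $0$); when you recentre at a point $z_Q=r_n y_0$ near the boundary and express $u(z_Q)$ as a Poisson integral over $r_{n+1}S$, both sides give \emph{upper} bounds on $u(z_Q)$ (from $u\le Kv_{n+1}$), so no information about the density of the bad set inside $Q$ follows. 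Your phrase ``Poisson integral of $h_{n+1}$ from the tent above $Q$'' is exactly harmonic measure on a cone domain, and the needed lower bound $\omega(0,A,G_t)\ge C\sigma(A)^\gamma$ is Dahlberg's theorem (Theorem~A in the paper). That theorem is the engine of the paper's proof: it yields directly $\sigma(E_n^{\epsilon})\le C\,v(1-\epsilon)^{-1/\gamma}$, after which a Vitali covering finishes. Without Dahlberg (or an equivalent $A_\infty$-type estimate for harmonic measure on Lipschitz domains), your iteration does not close.

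\textbf{Part (b).} Your construction does not produce a harmonic function. A kernel $K_v(x,\xi)$ with prescribed concentration and radial mass $\asymp v(|x|)/\mu(B(\xi,1-|x|))$ is not the Poisson kernel, so $\int_S K_v(x,\xi)\,d\mu(\xi)$ is not harmonic in $x$; subtracting a harmonic $u_0$ cannot repair this. If instead you take $K_v=P$ (the Poisson kernel) and $\mu$ a Frostman measure with gauge $t^{m-1}v(1-t)^\beta$, then $P*\mu\in\Theta^m_{v^\beta}$, so on the support of $\mu$ one only gets $u(r\xi)\asymp v(r)^\beta$, hence $u(r\xi)/v(r)\to 0$ and $E\not\subset E^+(u)$. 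The paper's device is entirely different: it builds bounded oscillating harmonic blocks $h_k$ (higher-dimensional analogues of $\Re z^{2^k}$) satisfying $|h_k|\le 1$, $h_k\ge 0$ on a large set, $h_k\ge 1/4$ on a ``core'' set, and the decay $|h_k(x)|\le c_{d,m}2^{-kd}(1-|x|)^{-d}$. A lacunary sum $u=\sum_n g(2^{b_n})h_{b_n}$ then lies in $\Psi^m_v$ by block-by-block estimates, while on the Cantor-type intersection of the cores one term dominates and gives $u\ge c\,v$. This oscillatory construction is what allows simultaneous two-sided control $|u|\le Kv$ and lower bound $u\ge c v$ on a set of positive $\HH_{\lambda_\beta}$-measure; a positive Poisson integral cannot achieve both for $\beta<1$.
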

Since $u\in \Psi_v^m$ and $E^-(-u) =E^+ (u) $, the estimate for $E^- $ in (a) is also sharp. 
 
In this theorem there is no difference between the size of the sets $E^\pm(u)$ for $u\in\Phi_v^m$ and $u\in\Psi_v^m$. The situation is different for positive harmonic functions as was also noted in \cite{BLMT}. We generalize the result on positive harmonic functions to a wide class of weights and show also that no a priori growth estimate is needed. More precisely, we obtain the following:

\begin{theorem} 
\label{th:pi}
Assume that $\lambda(t)=t^{m-1}v(1-t)$ is an increasing continuous function and $\lambda(0)=0$. \\
\noindent (a) For any positive harmonic function $u$ in the unit ball of $\R^m$ we define
\beq
\label{eq:F}
F_v^+(u)=\{y\in S:\limsup_{r\rightarrow 1}\frac{u(ry)}{v(r)}>0\}.
\eeq
Then $F_v^+(u)$ is the countable union of sets of finite $\HH_\lambda$-measure.\\
\noindent (b) There exists a positive function $u\in\Psi^m_v$ such that $\HH_\lambda(E^+(u))>0$.
\end{theorem}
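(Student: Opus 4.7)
The plan for part (a) is to combine the Herglotz representation with a standard density theorem for Hausdorff measures. Write $u=P[\mu]$ for a positive finite Borel measure $\mu$ on $S$. Using $P(ry,y')\asymp(1-r)/((1-r)+|y-y'|)^m$ and splitting $S$ into dyadic rings around $y$ yields
\begin{equation*}
u(ry)\ \leq\ C\sum_{k\ge 0} 2^{-k}\,\frac{\mu(B(y,r_k))}{r_k^{m-1}},\qquad r_k:=2^k(1-r).
\end{equation*}
Since $v$ is increasing and $r_k\ge 1-r$, one has $v(1-r_k)\le v(r)$ for every $k\ge 0$. Introducing $L(y):=\limsup_{d\to 0}\mu(B(y,d))/\lambda(d)$ and splitting the sum at a fixed scale $r_k=d_0$, the small-scale portion is bounded by $C(L(y)+\delta)v(r)$ (by the definition of $L$) while the tail $r_k>d_0$ is controlled crudely by $C\mu(S)(1-r)/d_0^m$, which vanishes after division by $v(r)\to\infty$. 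This yields the pointwise estimate
\begin{equation*}
\limsup_{r\to 1}\frac{u(ry)}{v(r)}\ \le\ C'L(y).
\end{equation*}
Because $\lambda(t)=t^{m-1}v(1-t)$ is doubling by \eqref{vdouble}, the standard density theorem gives $\HH_\lambda\{L>s\}\le C\mu(S)/s$ for every $s>0$, and the decomposition $F_v^+(u)=\bigcup_n\{y:\limsup u(ry)/v(r)>1/n\}\subseteq\bigcup_n\{L>1/(C'n)\}$ finishes (a).

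For (b) the plan is a Cantor-type construction: produce $E\subset S$ and a positive measure $\mu$ on $E$ with $\mu(B(y,d))\asymp\lambda(d)$ for all $y\in E$ and $d\in(0,1]$, then take $u=P[\mu]$. With scales $d_j=\eta^{-j}$ for a sufficiently large $\eta\ge 2$, build $E=\bigcap_j E_j$, where $E_j$ is a disjoint union of $M_j\asymp 1/\lambda(d_j)$ spherical caps of radius $d_j$, each obtained by subdividing a parent cap from $E_{j-1}$ into $\sim\eta^{m-1}$ children and keeping a fraction $\kappa_j\asymp\lambda(d_{j-1})/(\eta^{m-1}\lambda(d_j))$ of them. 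Iterated application of \eqref{vdouble} shows that this fraction stays in a compact subinterval of $(0,1]$, so the construction is feasible. Distributing mass uniformly at every stage and passing to the limit produces $\mu$ with the desired two-sided bound. The upper bound on $\mu$, combined with the Poisson estimate from (a) (sup in place of limsup), shows that $u\in\Psi_v^m$; the matching lower bound gives $u(ry)\ge c\,\mu(B(y,1-r))/(1-r)^{m-1}\ge c'v(r)$ for $y\in E$, so $E\subseteq E^+(u)$. Finally the mass-distribution principle yields $\HH_\lambda(E)\ge c\,\mu(E)>0$.

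The main obstacle is part (b). A direct application of Frostman's lemma supplies only the upper estimate $\mu(B(y,d))\le C\lambda(d)$, which is enough to place $u$ in $\Psi_v^m$ but is not enough to force $\liminf_{r\to 1} u(ry)/v(r)>0$ uniformly on a set of positive $\HH_\lambda$-measure. The matching lower estimate has to be engineered scale by scale, and the doubling hypothesis \eqref{vdouble} is precisely what keeps the stage-$j$ survival fraction $\kappa_j$ bounded away from zero, so that a self-similar measure with $\mu(B(y,d))\asymp\lambda(d)$ can actually be realized on $E$.
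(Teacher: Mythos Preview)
Your approach to both parts is essentially the same as the paper's; the differences are in packaging, not in substance.

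For part (a), the paper also reduces to the density statement ``$y\in F_n\Rightarrow \limsup_{d\to 0}\mu(B(y,d))/\lambda(d)\ge k(n)$'' (this is their Lemma~\ref{lem:1}) and then runs the Vitali $5r$-covering argument explicitly, whereas you invoke the density theorem as a black box. Your dyadic-ring estimate of $u(ry)$ plays the same role as the paper's integration-by-parts formula \eqref{pq} together with the $Q_m$ bounds \eqref{q1}--\eqref{q3}. One small correction: you cite \eqref{vdouble} to justify that $\lambda$ is doubling, but \eqref{vdouble} is not a hypothesis of Theorem~\ref{th:pi}. What you actually need, $\lambda(5t)\le C\lambda(t)$, follows directly from $v$ being increasing: $\lambda(5t)=(5t)^{m-1}v(1-5t)\le 5^{m-1}t^{m-1}v(1-t)=5^{m-1}\lambda(t)$. (The paper observes separately, at the start of the proof of (b), that $\lambda$ increasing does force $v(1-d/2)\le 2^{m-1}v(1-d)$, so \eqref{vdouble} holds a posteriori; but it is cleaner not to route through it in (a).)

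For part (b), the paper carries out the same Cantor-type programme: build $\mu$ supported on a Cantor set with the two-sided bound $\mu(B(y,d))\asymp\lambda(d)$, then take $u=P*\mu$. The implementation differs: the paper works in hyperspherical coordinates on $[1,2]^{m-1}$, uses an \emph{adapted} scale sequence $l_k=2^{-d_k}$ with $d_k=\min\{n:g(2^n)\ge 2^{(m-1)k}\}$ (so that exactly half the children are kept at each stage), and appeals to Proposition~\ref{mu} for $u\in\Theta_v^m$ and to Hatano's Lemma~B for $\HH_\lambda>0$. Your version with geometric scales $d_j=\eta^{-j}$ and variable retention fraction $\kappa_j\asymp v(1-\eta d_j)/v(1-d_j)$ is an equally valid realisation; the derived doubling of $v$ is precisely what keeps $\kappa_j$ bounded below, so the number of children retained per parent is at least one and the construction goes through (after the usual bookkeeping with integer parts and separation of caps, which you should state explicitly). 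The upper bound on $\mu$ then gives $u\in\Psi_v^m$ by the same Poisson-ring estimate you used in (a), the lower bound gives $E\subset E^+(u)$ via the elementary inequality $P(ry,y')\gtrsim(1-r)^{1-m}$ on $B(y,1-r)$, and the mass-distribution principle replaces Lemma~B. Both routes are standard; yours is perhaps a little more self-contained, while the paper's choice of adapted scales avoids tracking the varying $\kappa_j$.
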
 

The article is organized as follows. 
We collect some preliminary results on harmonic measure and Hausdorff measures in the next section. Then we prove Theorem \ref{th:haus}. For part (a) our arguments are similar to those in \cite{BLMT}, but in higher dimensions they are based on estimates of harmonic measure due to B.~E.~ Dahlberg, \cite{D}. A new approach is used to construct examples of functions with a large set of extremal growth in dimension larger than two in the proof of Theorem \ref{th:haus} (b). 

Finally, in the last section we study the radial growth of positive harmonic functions. We prove Theorem \ref{th:pi} and describe boundary measures that correspond to positive functions in $\Psi^m_v$.


\section{Preliminaries}
\subsection{Poisson kernel and some estimates}

Let $\sigma $ be the $(m - 1) $-dimensional 
surface measure on $S $ and denote $\sigma (S) =\gamma_{m -1} $.
The Poisson kernel in the  $m $-dimensional unit ball is
$$P(x,\z) =\frac1{\gamma_{m -1}}\frac{1 - |x |^2}{|x -\z |^m}. $$

 Assume for simplicity that $x = (1,0,...,0) $. 
 Using hyperspherical coordinates for $\z\in S $ we have
 $\z= (\cos\phi,\z') $, where $|\z' | =\sin\phi $.
Let 
$$ \tilde{P}_{m,r} (\phi) =\fraction1{\gamma_{m -1}}\fraction{1 -r^2}{(1 +r^2 -2r\cos \phi)^{m/2}} $$
and
  \[
Q_m(r,\phi)=-\partial_{\phi} \tilde{P}_{m,r} (\phi)=
       \frac{1}{\gamma_{m -1}}\frac{mr(1-r^2)\sin\phi}{(1 +r^2 -2r\cos \phi)^{(m +2)/2}}. 
       \]
Then $P(rx,\z)=\tilde{P}_{m,r} (\phi)$.

Let $d (x,\z) $ be the geodesic distance between two points $x $ and $\z $ on $S $. Then let $B (x,\p)= \{\z\in S: 
 d(x,\z) < \p\} $ be the hyperspherical cap of radius $\p $ with center in $x $. It can be shown that for the $(m -1) $-dimensional surface measure of the cap 
 \beq
 \label{cap}
  C_1 \p^{m -1}\le \sigma ( B (x,\p) ) \le C_2 \p^{m -1},
  \eeq 
  where the constants depend on $m $.

We will  need some estimates for integrals of $Q_m $. 
\medskip

\noindent {(i)}
 We have
\begin{gather*}
\int_{0}^{1 - r} Q_m(r,\phi)d\phi 
\le C_3 \int_{0}^{1 - r} \frac{r(1-r^2)\phi}{((1 - r)^2 +2r (1 -\cos \phi) )^{(m +2)/2}} d\phi \\
\le C_3\int_{0}^{1 - r} \frac{r(1-r^2)\phi}{(1 - r)^{m +2}} d\phi, 
\end{gather*} 
hence
\beq\label{q1}
\int_{0}^{1 - r} Q_m(r,\phi)d\phi \le C_4\fraction{1}{(1 - r)^{m -1} }.
\eeq
\medskip

\noindent {(ii)}
 For $d>0 $ 
\begin{gather*}
\int_{d}^{\pi} Q_m(r,\phi)d\phi 
\le C_3 \int_{d}^{\pi} \frac{r(1-r^2)\phi}{((1 - r)^2 +r\fraction{4}{\pi^2}\phi^2)^{(m +2)/2}} d\phi 
\le C_3 \int_{d}^{\pi} \frac{r(1-r^2)\phi}{(r\fraction{4}{\pi^2}\phi^2)^{(m +2)/2}} d\phi,
\end{gather*} 
thus
\beq
\label{q2}
 \int_{d}^{\pi} Q_m(r,\phi)d\phi \le C_5 r^{-m/2} d^{-m}\le C_6  d^{-m}
\eeq 
when $r> \fraction12$.

\medskip

\noindent {(iii)}
 Furthermore, by (\ref{cap}),
\begin{gather*}
 \int_{1 - r}^\pi \sigma (B (x,\phi))Q_m(r,\phi)d\phi\le \int_{1 -r}^\pi C_2 \phi^{m-1}  Q_m(r,\phi)d\phi \\
 \le C_7\int_{1 -r}^\pi\fraction{ (1 -r^2)\phi^{-2}} {r^{m/2 }} d\phi \le C_8 r^{-m/2},
\end{gather*} 
so for $r> \fraction12 $,
\beq
\label{q3}
 \int_{1 - r}^\pi \sigma (B (x,\phi))Q_m(r,\phi)d\phi \le C_9 .
\eeq

\subsection{Harmonic measure in Lipschitz domains} 
A bounded domain $\Omega\in \R^m $ is a Lipschitz domain if there is a constant $C $ such that to each point $q\in \partial \Omega $ there corresponds a coordinate system $(\xi,\eta) , \xi\in\R^{m -1},\eta\in\R$, and a function $\vp $ such that $|\vp (\xi_1) -\vp (\xi_2) |\le C|\eta_1 -\eta_2 | $ for  
$D\cap V =\{(\xi,\eta):\vp(\xi)<\eta\} $ for some neighborhood $V $ of $q $. The smallest such constant is called the Lipschitz constant.

Let $S $ be the 
 unit sphere in $\R^m $.
For $\z\in S$ and $a<1$ we use the standard notation $\Gamma^a_\z=\conv(\z, a\B)$ for the convex hull of $\z$ and the $m $-dimensional ball of radius $a$. 
Given a compact set $F\in \Sm $ we consider the cone-domain $G =G (F,a) =\cup_{\z\in F} \Gamma_\z^a$. It is a Lipschitz domain, and the Lipschitz constant of $G (F,a) $ depends on $a $ only.
Given a Jordan domain $\Omega$, a subset $A\subset\partial\Omega$ and a point $z\in \Omega$, we denote by $\omega(z,A,\Omega)$ the harmonic measure of $A$ at point $z$. 

A celebrated result by B. E. Dahlberg \cite{D} says that on the boundary of a Lipschitz domain the harmonic measure and the surface measure are mutually absolutely continuous. We need a quantitative form of this result for cone-domains and refer the reader to \cite{D, JK} and \cite[Chapter 4.2]{BM}.

\begin{facta} 
Let $a>0$, then there exist $\alpha$ and $ C$ that depend on $a$ and $m$ only such that
for any cone-domain $G=G(F,a)$ in the unit ball of $\R^m$ and any $A\subset Q\subset\partial G$
the following inequality holds
\[
\frac{\omega(0,A,G)}{\omega(0,Q,G)}\ge C\left(\frac{\eta(A)}{\eta(Q)}\right)^\alpha,
\]
where $Q$ is a ball on $\partial G$ and $\eta$ is the surface measure on $\partial G$.  
\end{facta}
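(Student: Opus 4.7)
The plan is to deduce the stated Hölder-type inequality by combining Dahlberg's theorem on mutual absolute continuity of harmonic and surface measure in Lipschitz domains with the standard self-improving argument that promotes a reverse Hölder inequality to an $A_\infty$-type bound. The essential input is the remark made just before the theorem, namely that the Lipschitz character of $G(F,a)$ is controlled purely by the cone opening $a$ and not by the compact set $F\subset S$, and that the origin is a uniform interior point of $G$, since $\dist(0,\partial G)\ge a$ by construction.

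First I would record Dahlberg's result from \cite{D} in its quantitative form: the Radon--Nikodym derivative $k(\zeta)=d\omega(0,\cdot,G)/d\eta(\zeta)$ exists, is positive $\eta$-almost everywhere, and satisfies the reverse Hölder-$2$ inequality
$$\left(\frac{1}{\eta(Q)}\int_Q k^2\,d\eta\right)^{1/2} \le \frac{C}{\eta(Q)}\int_Q k\,d\eta$$
on every surface ball $Q\subset\partial G$, with $C$ depending only on the Lipschitz character of $G$, hence only on $a$ and $m$. Next I would invoke the classical Gehring-type self-improvement (see \cite{JK} and \cite[Chapter 4.2]{BM}): a reverse Hölder inequality for a density against a doubling measure $\eta$ forces $k\,d\eta$ into the $A_\infty$ class of $\eta$. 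In particular there exist constants $C'$ and $\alpha>0$, determined only by $a$ and $m$, such that for every surface ball $Q$ and every measurable $A\subset Q$,
$$\frac{\omega(0,A,G)}{\omega(0,Q,G)}\ge C'\left(\frac{\eta(A)}{\eta(Q)}\right)^{\alpha},$$
which is the desired bound.

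The main technical hurdle I expect is the uniformity claim: one must verify that both the Lipschitz constant of $\partial G(F,a)$ and the doubling constant of the surface measure $\eta$ on $\partial G$ are genuinely independent of $F$. This is a geometric inspection of the construction $G=\bigcup_{\zeta\in F}\Gamma^a_\zeta$, carried out by locally straightening the boundary with a bi-Lipschitz map whose constants depend only on $a$. Once this uniformity is secured, every constant appearing in Dahlberg's theorem and in the passage from reverse Hölder to $A_\infty$ depends only on $a$ and $m$, and the proof is complete.
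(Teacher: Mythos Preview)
Your outline is correct and follows exactly the standard route through Dahlberg's reverse H\"older inequality and the $A_\infty$ self-improvement; the uniformity point you flag (that the Lipschitz character of $G(F,a)$ and the doubling of $\eta$ depend only on $a$ and $m$) is indeed the only thing to check. Note, however, that the paper does not actually prove Theorem~A: it is stated as a known result with references to \cite{D,JK} and \cite[Chapter~4.2]{BM}, so there is no ``paper's own proof'' to compare against---your proposal is essentially a sketch of what one finds in those references.
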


\subsection{Hausdorff measures}
\label{preliminaries haus}

We will refer to generalized Hausdorff measures in $\R^m$ as they are defined for example in \cite[p.\,59]{M}. Let $h$ be an increasing continuous function on $[0,+\infty), h(0)=0$, then for any $E\subset\R^m$
\[\HH_h(E)=\liminf_{\delta\rightarrow 0}\{\sum_j h(d_j): E\subset\cup_jF_j, d_j=\diam(F_j)<\delta\}.\]
We assume in addition that $h(t/2)\ge ch(t)$ for some $c>0$. Then the Hausdorff measure is equivalent to the  
so-called net measure $N_h(E)$ defined with $F_j$ being half-open dyadic cubes with sides parallel to the coordinate axis in the following sense: $\HH_h(E)\le N_h(E)\le A(c,m)\HH_h(E)$, see \cite[p.\,76]{M}. Further, the following property holds, if $f:\R^k\rightarrow \R^m$ is a Lipschitz map and $E\subset\R^k$, then $\HH_h(f(E))\le L\HH_h(E)$, where $L$ depends on the Lipschitz constant of $f$ and on $c$. The proofs follow readily from the definitions.

We will use  Cantor-type  sets  having the following structure:
\[
C=\cap_s C_s,  \ C_s \supset C_{s+1}, \ C_0=[0,1], 
\]
each set $C_s$ is a union of $N_s$  segments $\{I^{(s)}_j\}_j$ of the same length $l_s$. For each such segment the intersection $C_{s+1}\cap I^{(s)}_j$
is a union of $k_s$  non-overlapping segments of length $l_{s+1}$. We assume, of course, that
\beqs
\label{eq:34}
(i)\ l_s \searrow 0 \ \text{as} \ s \to \infty, \quad  (ii)\ k_sl_{s+1} < l_s, \quad \text{and}  \quad  (iii) \ N_s=k_0k_1\ldots k_{s-1}.
\eeqs
The next result is Theorem 3 in \cite{BLMT}.

\begin{factla}Let   $\lambda:[0,1)\rightarrow[0,+\infty)$ be a continuous increasing function with $\lambda(0)=0$, such that  for some $a>0$ and $s>s_0$
\begin{equation}
\label{eq:cantorh} 
\frac{\lambda(l)}{l}\ge a\frac{\lambda(l_{s+1})}{l_{s+1}}\quad{\rm{
for\ any}}\ l\in[l_{s+1},l_s).
\end{equation}
Then
\beq
\label{eq07}
\liminf_{s\rightarrow\infty} N_s\lambda(l_s) 
                   \ge \HH_\lambda(C)
                           \ge \frac{a}{2}\liminf_{s\rightarrow\infty} N_s\lambda(l_s).
\eeq
\end{factla}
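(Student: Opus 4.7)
The plan is to prove the two inequalities in \eqref{eq07} separately: the upper bound by taking the obvious cover, and the lower bound by applying the Frostman mass distribution principle to the natural Cantor probability measure on $C$.

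For the upper bound I would observe that for each $s$ the set $C$ is covered by the $N_s$ intervals $\{I^{(s)}_j\}$, each of diameter $l_s$. This gives $\HH_\lambda^{l_s}(C)\le N_s\lambda(l_s)$, and letting $s\to\infty$ (so $l_s\to 0$) yields $\HH_\lambda(C)\le\liminf_s N_s\lambda(l_s)$.

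For the lower bound I would introduce the Borel probability measure $\mu$ on $C$ that assigns mass $1/N_s$ to each level-$s$ interval $I^{(s)}_j$. Consistency across levels is immediate because $I^{(s)}_j$ contains exactly $k_s$ children each of mass $1/N_{s+1}=1/(k_sN_s)$, and $\mu(C)=1$. The key step is a uniform estimate of $\mu(U)$ for small $U$. Given $U$ with diameter $d$, choose $s$ with $l_{s+1}\le d<l_s$. The level-$(s+1)$ intervals are pairwise disjoint of common length $l_{s+1}$, so a set of diameter $d$ sits inside some interval of length $d$ and meets at most $\lfloor d/l_{s+1}\rfloor +1\le 2d/l_{s+1}$ of them (the last step uses $d\ge l_{s+1}$). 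Therefore
\[
\mu(U)\le\frac{2d}{l_{s+1}\,N_{s+1}}.
\]
Invoking \eqref{eq:cantorh} with $l=d$ in the equivalent form $d/l_{s+1}\le\lambda(d)/(a\lambda(l_{s+1}))$ converts this to
\[
\mu(U)\le\frac{2}{a}\cdot\frac{\lambda(d)}{N_{s+1}\lambda(l_{s+1})}.
\]
For any $M'<\liminf_s N_s\lambda(l_s)$ and $d$ sufficiently small, the denominator exceeds $M'$, so $\mu(U)\le (2/(aM'))\,\lambda(d)$ holds uniformly. The mass distribution principle (see \cite[Ch.\,4]{M}) then gives $\HH_\lambda(C)\ge aM'/2$, and sending $M'\to\liminf_s N_s\lambda(l_s)$ closes the estimate.

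The one substantive step is the conversion of the combinatorial count $2d/l_{s+1}$ into the ratio $(2/a)\lambda(d)/\lambda(l_{s+1})$ via \eqref{eq:cantorh}; choosing level $s+1$ rather than $s$ is precisely what delivers the sharp constant $a/2$ (a coarser cover at level $s$ would only yield $a/3$). The remaining ingredients—the consistent construction of $\mu$, the elementary interval count, and the mass distribution principle—are standard and should not present any difficulty.
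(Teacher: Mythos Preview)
The paper does not prove Lemma~A at all; it is quoted verbatim as Theorem~3 of \cite{BLMT} and used as a black box. So there is no proof in the paper to compare your attempt with. That said, your approach---the obvious covers for the upper bound and the natural Cantor measure together with the mass distribution principle for the lower bound---is exactly the standard route and is what one would expect the original proof in \cite{BLMT} to do.

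There is, however, a small arithmetical slip in your interval count that affects the constant. You assert that an interval of length $d$ can meet at most $\lfloor d/l_{s+1}\rfloor+1$ of the pairwise disjoint level-$(s+1)$ intervals. In the generality of the lemma (the intervals are only assumed non-overlapping, with no lower bound on the gaps) the correct bound is $\lfloor d/l_{s+1}\rfloor+2$: for instance with $l_{s+1}=1$ and $d=1.5$ the three intervals $[-0.9,0.1]$, $[0.2,1.2]$, $[1.3,2.3]$ all meet $[0,1.5]$. Consequently the inequality ``$\le 2d/l_{s+1}$'' can fail when $d/l_{s+1}\in(1,3/2)$, and the argument as written yields only $\HH_\lambda(C)\ge \frac{a}{3}\liminf_s N_s\lambda(l_s)$ rather than $\frac{a}{2}$. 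Everything else in your outline is fine, and for every application in the present paper (where the lemma is used only to conclude $\HH_{\nu_\beta}(C)=\infty$) the value of the constant is immaterial. If you want the sharp $a/2$ you would need either additional separation information about the construction or a more careful two-scale estimate; otherwise simply replace $2$ by $3$ in your count.
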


Two slightly more delicate results that we need, give estimates of the Hausdorff measure of (symmetric) Cantor sets and cylinder sets in higher dimensions. Note also that we are not interested in the exact value of the Hausdorff measure but only in its positivity.

\begin{factlb}[Hatano, \cite{HA}] 
Let $\{k_q\}_{q=1}^\infty$ be a sequence of positive integers and $\{l_q\}_{q=0}^\infty$, $l_0=1$ be a sequence of positive numbers that satisfy
$k_{q+1}l_{q+1}<l_q$. The generalized symmetric Cantor set $E$ in $\R^m$ defined by the sequences $\{k_q\}$ and $\{l_q\}$ is constructed in the following way: Let $C_0=[0,1]$, $C_1$ is obtained from $C_0$ by removing $k_1-1$ open intervals of equal lengths such that remaining $k_1$ closed intervals are of length $l_1$. Then, to get $C_2$, $k_2-1$ open intervals are removed from each interval of $C_1$ such that remaining intervals are of length $l_2$, etc.  Define $C=\cap_n C_n$ and $E=C\times C\times...\times C$.

Then $\HH_h(E)>0$ if and only if  $\liminf_{q\rightarrow\infty}(k_1...k_q)^mh(l_q)>0.$
\end{factlb}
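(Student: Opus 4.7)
The plan is to establish the two implications separately: necessity via the natural cover of $E$, and sufficiency via a Frostman-type mass distribution argument.

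For the forward direction I would cover $E$ at construction stage $q$ by its $(k_1\cdots k_q)^m$ cubes of side $l_q$ and diameter $\sqrt m\,l_q$. The standing assumption $h(t/2)\ge c\,h(t)$ recorded in Section \ref{preliminaries haus} gives $h(\sqrt m\, l_q)\le C(m)\,h(l_q)$, so summing $h(\diam)$ over this cover and letting $q\to\infty$ yields
\[
\HH_h(E)\le C(m)\liminf_{q\to\infty}(k_1\cdots k_q)^m h(l_q);
\]
vanishing of the liminf forces $\HH_h(E)=0$.

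For the converse, set $c_0=\liminf_q(k_1\cdots k_q)^m h(l_q)>0$ and equip $E$ with the natural probability measure $\mu$ defined by the consistency rule $\mu(K)=(k_1\cdots k_q)^{-m}$ on every stage-$q$ construction cube $K$. By the mass distribution principle it suffices to prove a uniform Frostman-type bound $\mu(A)\le C\,h(\diam A)$ for Borel sets $A$. Given such $A$ with $d=\diam A$, I would choose the unique $q$ with $l_{q+1}\le d<l_q$. Since stage-$q$ intervals on each axis are pairwise disjoint of length $l_q$ and the projection of $A$ onto that axis has length $\le d<l_q$, the projection meets at most two stage-$q$ intervals; hence $A$ meets at most $2^m$ stage-$q$ cubes and
\[
\mu(A)\le 2^m(k_1\cdots k_q)^{-m}\le (2^m/c_0)\,h(l_q).
\]
When $l_q$ is comparable to $d$, the quasi-doubling of $h$ converts this into $\mu(A)\le C\,h(d)$. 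When $l_q\gg d$, I would descend one level: inside each stage-$q$ parent the $k_{q+1}$ children of length $l_{q+1}$ are spread across an interval of length $l_q$, so the projection of $A$ meets at most $1+Cd\,k_{q+1}/l_q$ stage-$(q+1)$ intervals per axis. Multiplying across the $m$ coordinate directions and invoking the liminf hypothesis at level $q+1$ gives the bound $\mu(A)\le C_m'\,h(l_{q+1})\le C_m'\,h(d)$.

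The principal technical obstacle is precisely this uniform Frostman bound. The ratio $l_{q+1}/l_q$ is not assumed controlled, so $h(l_q)$ may dwarf $h(d)$ even though $l_{q+1}\le d<l_q$, and one cannot simply work at a single construction stage. The case split between the coarse stage $q$ and the fine stage $q+1$ above, together with the careful bookkeeping of the factors $k_{q+1}$, is the technical heart of the argument. Once the Frostman bound is in hand, the mass distribution principle immediately delivers $\HH_h(E)\ge\mu(E)/C=1/C>0$, completing the equivalence.
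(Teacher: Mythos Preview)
The paper does not give its own proof of Lemma B; it is quoted from Hatano \cite{HA} and used as a black box (only the remark that the open-cube measure used by Hatano is comparable to $\HH_h$ under the standing hypothesis $h(t/2)\ge c\,h(t)$ is added). So there is nothing in the paper to compare your argument against.

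On the substance of your sketch: the necessity direction is correct as written. In the sufficiency direction your two-case split does not close. After descending to level $q+1$ you bound the number of stage-$(q+1)$ cubes meeting $A$ by $(1+Cdk_{q+1}/l_q)^m$ and then assert that ``invoking the liminf hypothesis at level $q+1$ gives $\mu(A)\le C_m'\,h(l_{q+1})$''. That step silently drops the prefactor $(1+Cdk_{q+1}/l_q)^m$, which is \emph{not} bounded merely because $d\ll l_q$: one needs $d\lesssim l_q/k_{q+1}$, whereas your case~(b) only assumes $d$ is small compared to $l_q$. The regime $l_q/k_{q+1}\lesssim d\ll l_q$ is covered by neither case. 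If one keeps the prefactor and instead uses the liminf at level $q$, the resulting bound is $\mu(A)\le C(d/l_q)^m h(l_q)$, and converting this to $C'h(d)$ requires $h(l_q)/l_q^m\le C'h(d)/d^m$; under the paper's standing hypothesis $h(t/2)\ge c\,h(t)$ this is automatic only when $c\ge 2^{-m}$, not for arbitrary $c>0$. (In the paper's single application of Lemma~B one has $h=\lambda$ with $\lambda(t/2)\ge 2^{1-m}\lambda(t)$, so this stronger condition happens to hold, and your argument would go through there.) For the general statement one needs the more careful counting that Hatano carries out; your sketch identifies the right obstacle but does not actually overcome it.
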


The measure used in \cite{HA} is not the classical Hausdorff measure but one defined using coverings by all open cubes. As we mentioned above, under 
our condition on $h$ the two measures are equivalent (up to a multiplicative constant). 

The next statement is intuitively clear but we were not able to find a precise reference, so we outline a short proof.

\begin{lemma}
\label{crossproduct} Let $h(t)=t^{k-1}\nu(t)$, where $\nu$ is an increasing continuous function on $[0,+\infty)$ and $\nu(0)=0$. Assume also that $\nu(t/2)\ge c\nu(t)$ for some $c>0$. If $F\subset [0,1]$ is compact, $\HH_\nu(F)>0$, and $E=F\times [0,1]^{k-1}\subset\R^k$, then $\HH_h(E)>0$. 
\end{lemma}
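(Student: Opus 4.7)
The plan is to construct a Frostman-type measure on $E$ as the tensor product of a Frostman measure on $F$ with Lebesgue measure on $[0,1]^{k-1}$, and then apply the easy half of Frostman's lemma. The doubling of $\nu$, inherited by $h$, makes $\HH_h$ equivalent to the dyadic net measure $N_h$, as recalled in the preliminaries; so it suffices to bound $\sum h(s_j)$ from below over all dyadic cube covers of $E$.

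First, since $\HH_\nu(F)>0$ and $\nu$ is doubling, a generalized Frostman lemma produces a finite nonzero Borel measure $\mu$ supported on $F$ and a constant $C_1$ such that
\[
\mu(I)\le C_1\,\nu(|I|)
\]
for every dyadic interval $I\subset[0,1]$. Such a $\mu$ can be produced by the standard max-flow / min-cut argument on the binary tree of dyadic subintervals of $[0,1]$: truncate at a fine dyadic level, read the max-flow as a measure on that level with total mass comparable to $N_\nu(F)$ and the above growth bound, and pass to a weak-$*$ limit. This construction only uses that $\nu$ is doubling, not that it is a power.

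Next, set $\tilde\mu=\mu\otimes\mathcal{L}^{k-1}$, where $\mathcal{L}^{k-1}$ is $(k-1)$-dimensional Lebesgue measure on $[0,1]^{k-1}$. Then $\tilde\mu$ is supported on $E$ with $\tilde\mu(E)=\mu(F)>0$. For any dyadic cube $Q\subset[0,1]^k$ of side $s$, the first-coordinate projection $\pi_1(Q)$ is a dyadic interval of length $s$, so by Fubini
\[
\tilde\mu(Q)=\mu(\pi_1(Q))\cdot s^{k-1}\le C_1\,\nu(s)\cdot s^{k-1}=C_1\,h(s).
\]
Summing over any dyadic cube cover $\{Q_j\}$ of $E$ with side lengths $s_j$ yields
\[
\mu(F)=\tilde\mu(E)\le\sum_j\tilde\mu(Q_j)\le C_1\sum_j h(s_j),
\]
and therefore $N_h(E)\ge\mu(F)/C_1>0$. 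The equivalence $\HH_h\asymp N_h$ then gives $\HH_h(E)>0$.

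The only real obstacle is the first step: pinning down Frostman's lemma for a general doubling gauge $\nu$ rather than the familiar power case $\nu(t)=t^s$. Once a Frostman measure on $F$ is in hand, the product construction and the covering estimate above are entirely routine.
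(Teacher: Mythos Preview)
Your argument is correct and takes a genuinely different route from the paper. The paper proves the contrapositive by a direct covering manipulation: assuming $N_h(E)=0$, it takes a near-optimal finite dyadic cover of $E$ and, level by level, replaces the cubes in each ``column'' $I_j\times[0,1]^{k-1}$ by translates of the cheapest slab, eventually producing a cover that is uniform in the $[0,1]^{k-1}$ direction; projecting then yields a cheap $\nu$-cover of $F$, contradicting $\HH_\nu(F)>0$. Your approach instead invokes the Frostman lemma for a general doubling gauge to get a mass distribution $\mu$ on $F$ with $\mu(I)\lesssim\nu(|I|)$, tensors with Lebesgue measure, and applies the mass distribution principle. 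Both arguments ultimately rest on the equivalence $\HH_h\asymp N_h$ for doubling $h$. The paper's proof is entirely self-contained and elementary, while yours is shorter and more conceptual but imports the gauge-function Frostman lemma as a black box; your sketch of that lemma via max-flow/min-cut on the dyadic tree is the standard one and is valid for compact $F$, so there is no gap, only a shift in where the work is done.
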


\begin{proof}
We will use that $\HH_\nu$ is equivalent to $N_\nu$ and $\HH_h$ is equivalent to $N_h$. Assume that $N_h(E)=0$, then for any $\epsilon>0$ and $\delta>0$ there exists a \textit{finite } family of half-open dyadic cubes $\{Q_\alpha\}$ with sides $l_\alpha=2^{-n_\alpha}<\delta$ that covers $E=F\times [0,1]^{k-1}$ and such that $\sum_\alpha h(l_\alpha)<\epsilon$.  Indeed we can find an infinite family for which $\sum h(l_\alpha)<2^{-k}\epsilon$, then for each cube $Q$ in this family, take an open cube that contains $Q$, has side length which is twice that of $Q$ and can be covered by $2^k$ half-open dyadic cubes of the same size as $Q$. Then we choose a finite sub-cover of the compact set $E$.

Let $n=\min_{\alpha} n_\alpha$ and $N=\max_{\alpha} n_\alpha$, we divide $[0,1]$ into dyadic intervals of length $2^{-n}$, $[0,1]=\cup_j I_j$. For each $j$ consider cubes $K_{j,s}=I_j\times J_s, 1\le s\le 2^{n (k -1)}$, where $J_s $ is a dyadic subcube of $[0,1]^{k -1} $  with side length $2^{-n} $. 
 Now for each $s$ let
\[d_{j,s}=\sum_{\alpha: Q_\alpha\subset K_{j,s}} h(l_\alpha).\]
Choose $t=t(j)$ such that $d_{j,t}=\min_s d_{j,s}$ and
replace the covering $\{Q_\alpha\}$ by a new one $\{Q_\beta\}$ such that $\sum_\beta h(l_\beta)\le \sum_\alpha h(l_\alpha)$, and for each $j$ the cubes $\{Q_\beta\}$ contained in $K_{j,s}$ can be obtained from the cubes $\{Q_\alpha\}$ contained in $K_{j,t(j)}$ by 
translation. If for the new family $\min n_\beta>n$, we repeat the procedure. If not, we get some chains of cubes $K_{j,1},...,K_{j,2^{(k-1)n}}$ in the new family and repeat the procedure on the complement of these chains. Anyway the size of the smallest cubes is always at least $2^{-N}$ and after finitely many steps we find  a family of intervals $I_\gamma$ of length $l_\gamma<\delta$ that covers $F$ and 

\[\sum_\gamma \nu(l_\gamma)=\sum_\gamma h(l_\gamma)l_\gamma^{-(k-1)}<\epsilon.\] Thus $\HH_\nu(F)=0$. 
\end{proof}

\section{Sets of extremal growth or decay}

\subsection{Lebesgue measure of sets of extremal growth}
In this subsection we first estimate the Lebesgue measure of the sets $E^\pm(u)$.
\begin{proposition}
Suppose that $u\in\Phi^m_v$, then $\sigma (E^-(u))=0$.
\end{proposition}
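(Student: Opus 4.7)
The plan is to exhaust $E^-(u)$ by a countable family of closed subsets and show that each has surface measure zero. Set
\[
A_{n,k} = \{y \in S : u(ry) \le -v(r)/n \text{ for every } r \in [1-1/k,\,1)\};
\]
continuity of $u(r\cdot)$ on $S$ makes $A_{n,k}$ a closed (hence compact) subset of $S$, and clearly $E^-(u) = \bigcup_{n,k\ge 1} A_{n,k}$. So it is enough to prove $\sigma(A_{n,k}) = 0$ for every pair $n,k$.

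For each $r \in (0,1)$ I would introduce the positive Borel measure on $S$ defined by
\[
d\mu_r(\zeta) = \Bigl(K - \frac{u(r\zeta)}{v(r)}\Bigr)d\sigma(\zeta),
\]
which is non-negative because $u(r\zeta) \le Kv(r)$. Its total mass equals $\gamma_{m-1}(K - u(0)/v(r))$ and hence tends to $K\gamma_{m-1}$ as $r \to 1$. Since $u(r\cdot)$ is harmonic in a neighbourhood of $\overline{\B}$, the Poisson reproducing property yields, for every fixed $x \in \B$,
\[
\int_S P(x,\zeta)\,d\mu_r(\zeta) = K - \frac{u(rx)}{v(r)} \longrightarrow K \quad\text{as } r \to 1.
\]
The $\mu_r$ are uniformly bounded in total variation, so by Banach--Alaoglu any weak-$*$ cluster point $\mu_\infty$ satisfies $\int_S P(x,\zeta)\,d\mu_\infty(\zeta) = K$ for every $x \in \B$. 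Uniqueness of the Herglotz representation forces $\mu_\infty = K\,d\sigma$, so the full family converges: $\mu_r \to K\,d\sigma$ weak-$*$.

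To conclude, observe that for $\zeta \in A_{n,k}$ and $r \in [1-1/k,\,1)$ one has $K - u(r\zeta)/v(r) \ge K + 1/n$, hence
\[
\mu_r(A_{n,k}) \ge (K + 1/n)\,\sigma(A_{n,k}).
\]
Since $A_{n,k}$ is closed and the total masses $\mu_r(S)$ converge to $(K\,d\sigma)(S) = K\gamma_{m-1}$, the standard portmanteau argument (approximate $\mathbf{1}_{A_{n,k}}$ from above by continuous functions) gives $\limsup_{r\to 1}\mu_r(A_{n,k}) \le K\,\sigma(A_{n,k})$. Combining the two bounds yields $(K+1/n)\sigma(A_{n,k}) \le K\,\sigma(A_{n,k})$, which forces $\sigma(A_{n,k}) = 0$. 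The delicate step is the weak-$*$ identification of the limit as the \emph{uniform} multiple $K\,d\sigma$: it is what turns the global mass bound $\mu_r(S)\approx K\gamma_{m-1}$ into the local bound $K\sigma(A_{n,k})$, and without this localisation the mean-value inequality alone only yields $\sigma(A_{n,k}) \le K\gamma_{m-1}/(K+1/n)$, which is far from zero.
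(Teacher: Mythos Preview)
Your argument is correct, and it is genuinely different from the paper's. The paper fixes a compact $F_n\subset E^-(u)$ with $\sigma(F_n)>0$, builds the cone domain $G=\bigcup_{\zeta\in F_n}\Gamma_\zeta^a$, and invokes Dahlberg's theorem to get $\omega(0,F_n,G)=c>0$; then harmonic-measure estimates in the truncated domains $G_t$ force $u(0)\le v(s)-cv(t)/(3n)\to-\infty$, a contradiction. Your route avoids all of this: you simply normalise $K-u(r\cdot)/v(r)$ to a positive measure, identify every weak-$*$ cluster point as $K\,d\sigma$ via the uniqueness of the Poisson representation, and then feed the closed set $A_{n,k}$ through the portmanteau inequality. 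The only nontrivial input is the classical fact that a finite positive measure on $S$ is determined by its Poisson integral, which is far more elementary than Dahlberg's theorem. On the other hand, the paper's cone-domain/harmonic-measure machinery is not wasted: it is exactly what is reused (together with the quantitative Theorem~A) to obtain the sharper Hausdorff-measure estimates in Theorem~\ref{th:haus}(a), where a pure weak-$*$ argument would not suffice. So your proof is cleaner for this proposition in isolation, while the paper's approach is an investment that pays off later.
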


\begin{proof}
We have $E^{-}(u)=\cup_nF_n=\cup_n\{\z \in S: u(r\z)\le-\frac1n v(r), r\ge 1-\frac1n\}.$
Assume $\sigma(E^-(u))>0$. Then  $\sigma (F_n)>0$ for some $n$, and $F_n$ is a compact subset of $S$.  Let $G=\cup_{\z\in F_n} \Gamma^a_\z$ and $G_\alpha=G\cap \alpha\B$ for $\alpha<1$. 
We have $\partial G=F_n\cup L$, where $L=\partial G\cap \B$.

We will estimate $u(0)$ using harmonic measure in domain $G_\alpha$.
First, it follows from Dahlberg's theorem 
that $\omega(0 ,F_n, G)=c>0$. Now let $L_\alpha=\partial G\cap \alpha\B=L\cap \alpha\B$ and let $p_\alpha(A)$ be the radial projection of a set $A$ onto $\alpha S $, where $0<\alpha\le 1$. Then
\[
\partial G_\alpha=L_\alpha\cup \alpha F_n\cup p_\alpha(L\setminus L_\alpha).\]
Choose $s>1-\frac1n$ such that 
\[\omega(0, L\setminus L_s, G)<\frac{c}{3n}\quad{\rm\ and\ }\quad \sigma (p_1(L\setminus L_s))<\frac{\gamma_{m -1}c}{3n}.\]
Let $s <t <1 $. Then, since $G_t\subset G$, 
\[
\omega(0,L_t\setminus L_s,G_t)\le\omega(0,L_t\setminus L_s, G)\le\omega(0,L\setminus L_s,G)<\frac{c}{3n}.\]
Further, $$\omega(0, p_t(L\setminus L_t), G_t)\le\omega(0, p_t(L\setminus L_t), t\B)=\frac1{\gamma_{m -1}}\sigma (p_1(L\setminus L_t))<\frac{c}{3n}.  $$
Finally, 
we want to estimate $\omega(0, tF_n, G_t)$. Note that $tG\subset G_t$, 
then
\[
\omega(0, tF_n, G_t)\ge\omega(0,tF_n,tG)=\omega(0,F_n,G)=c.\] 

Now we apply the estimates for the function $u$, which is harmonic in $G_t$, using that $\partial G_t=L_s\cup(L_t\setminus L_s)\cup tF_n\cup p_t(L\setminus L_t).$
\[
u(0)\le v(s)\omega(0,L_s,G)+v(t)\frac{2c}{3n}-\frac{v(t)}{n}c\le v(s)-\frac{v(t)c}{3n}.\]
When $t$ goes to $1$ we get a contradiction, since $v(t)\rightarrow\infty$. 
\end{proof}

To deal with the set $E^+(u)$ we assume  that the function $v$ fulfills (\ref{vdouble}). The proof follows the argument from \cite{BLMT}.
\begin{lemma}\label{l:n-cone}
Let $u\in\Phi^m_v$ where $v$ satisfies (\ref{vdouble}) and assume 
$u(x)>c v(|x|)$ for some $x\in\B$. Then there exists $\tau=\tau(K, D, c)>0$ such that $u(x')>c/2 v(|x |)$ whenever $|x -x' | <\tau \,(1-|x |)$, $|x'| = |x | $.

The same statement holds if we write $<$ in both inequalities and assume that $c <0 $. 
\end{lemma}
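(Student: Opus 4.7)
The plan is a Harnack-type continuity argument at the natural scale $1-|x|$, using the doubling hypothesis to cut out a ball on which $u$ has a clean upper bound, and then comparing $u(x')$ to $u(x)$ via the Poisson kernel estimate for nonnegative harmonic functions.

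First I would set $r=|x|$, $d=1-r$ and work on $B(x,d/2)$. For every $y\in B(x,d/2)$ we have $|y|\le 1-d/2$, so by (\ref{vdouble}) $v(|y|)\le Dv(r)$, and since $u\in\Phi^m_v$ this gives $u(y)\le KDv(r)$ on $B(x,d/2)$. Therefore $w(y):=KDv(r)-u(y)$ is nonnegative and harmonic on this ball, and by the standard Poisson representation, for $|y-x|=t\cdot d/2$ with $t\in[0,1)$,
\[
\frac{1-t}{(1+t)^{m-1}}\,w(x)\le w(y)\le \frac{1+t}{(1-t)^{m-1}}\,w(x).
\]
Writing $|x'-x|=\tau d$, so that $t=2\tau$, and taking $\tau\le 1/4$, both bounding factors differ from $1$ by at most $c_m\tau$ with $c_m$ depending only on $m$, and therefore $|u(x')-u(x)|=|w(x')-w(x)|\le c_m\tau\,w(x)$.

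The proof then splits into the two stated cases. If $c>0$ and $u(x)>cv(r)$, then $w(x)<(KD-c)v(r)$, so $u(x')>u(x)-c_m\tau(KD-c)v(r)>cv(r)-c_m\tau(KD-c)v(r)$, and this exceeds $(c/2)v(r)$ provided $\tau<c/\bigl(2c_m(KD-c)\bigr)$. If $c<0$ and $u(x)<cv(r)$, set $M=-u(x)/v(r)>|c|$ so that $w(x)=(KD+M)v(r)$. The same estimate gives $u(x')\le v(r)\bigl(-M(1-c_m\tau)+c_m\tau KD\bigr)$, and the requirement that this lie below $(c/2)v(r)=-(|c|/2)v(r)$ reduces to $M(1-c_m\tau)>|c|/2+c_m\tau KD$, which, using $M>|c|$, is ensured whenever $\tau<|c|/\bigl(2c_m(KD+|c|)\bigr)$. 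Taking $\tau$ equal to the minimum of the two thresholds above yields the required $\tau=\tau(K,D,c)>0$.

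The only step that requires a moment's thought is the case $c<0$: since $u$ has no a priori lower bound, $w(x)$ (equivalently $M$) may be arbitrarily large, and one cannot reduce to the $c>0$ case by considering $-u$ (which need not belong to $\Phi^m_v$). The resolution is that the error bound $c_m\tau\,w(x)$ and the required gap between $u(x)$ and $(c/2)v(r)$ both scale linearly in $M$, so the sufficient condition on $\tau$ is uniform in $M>|c|$.
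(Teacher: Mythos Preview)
Your argument is correct. Both your proof and the paper's rest on the same idea---use the upper bound $u\le KDv(r)$ on a ball of radius $(1-r)/2$ together with a Poisson-kernel comparison at scale $1-r$---but the packaging differs. The paper works with the Poisson representation on the ball of radius $R=(1+r)/2$ centered at the origin and proves the one-sided kernel inequality $P(y,\zeta)\ge (1-\tau_1)^m P(y',\zeta)$ directly, which yields $u(x)\le q\,u(x')+(1-q)KDv(r)$ with $q=(1-\tau_1)^m$; this makes the $c<0$ case a one-liner, since the resulting bound on $u(x')$ does not involve the (uncontrolled) size of $-u(x)$. You instead shift to the nonnegative function $w=KDv(r)-u$ on the ball centered at $x$ and invoke Harnack, which is conceptually cleaner but forces you to track the parameter $M=-u(x)/v(r)$ in the $c<0$ case and observe that the $M$-dependence cancels. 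Either route gives $\tau=\tau(K,D,c,m)$; note that in both arguments the constant also depends on the dimension $m$ (through $c_m$ in yours, through the exponent $m$ in the paper's $q$), which the statement suppresses.
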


\begin{proof} 
Let $y,y'\in\B$. Assume that $|y|=|y'|$ and $|y -y' | <\tau_1 \,(1-|y |)$, where $\tau_1 <1$, 
then for any $\z\in S$
$$|y ' -\z | \ge |y -\z | - | y -y' |> |y -\z | -\tau_1 |y -\z |= (1 -\tau_1) | y-\z |. $$
Thus
$|y -\z |^m (1 -\tau_1)^m <|y' -\z |^m  $ and
\beq
\label{poi}
P (y,\z)> (1 -\tau_1)^mP (y',\z).
\eeq

Let $r = |x | $, $R= (1+r)/2$ and denote $q=q(\tau_1)= (1-\tau_1)^m$.
We apply (\ref{poi}) with $y =\fraction{x}R$, $y' =\fraction{x'}R$ and $|y -y' | <\tau_1 \,(1-|y |)$. 
Then 
\bas 
u(x)&=&\int_S  u(R\z)P\left(\frac{x}{R},\z \right)d\sigma (\z) \\
&=&qu(x')+
\int_S u(R\z)\left(P\left(\frac{x}{R},\z\right)-
q P\left(\frac{x'}{R},\z\right)\right)d\sigma (\z)\\
& \le &qu(x')+
\int_S Kv(R)\left(P\left(\frac{x}{R},\z\right)-
q P\left(\frac{x'}{R},\z\right)\right)d\sigma (\z)\\
&=&qu(x')+(1 -q) Kv(R)\le qu(x')+(1 -q) KD v(r).
\eas 
If $\tau_1$ is such that $c-(1 -q)KD\ge \frac c 2q$ and $|x -x' | < \frac{\tau_1} 2 (1-r)<\tau_1 \left ( 1- \frac r R \right ) $, then $u(x') >\frac  c  2 v(r)$. To complete the proof it suffices to  choose $\tau(K,D,c)=\tau_1/2$.  

For the second case when $c<0$, we use the inequality
$$u (x ')\le qu (x)+(1 -q)KD v(r)<\left(qc +(1 -q) KD\right) v (r)$$
and choose $\tau_1 $ such that $q c+\left (1 - q\right) KD \le \frac{c}2 $. 

\end{proof}

\begin{corollary}
If $u\in\Phi^m_v $ where $v $ satisfies (\ref{vdouble}), then $\sigma (E^+ (u) )=0 $.
\end{corollary}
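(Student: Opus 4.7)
The plan is to adapt the cone-domain argument of the preceding proposition, but use Lemma \ref{l:n-cone} to produce a lower bound on $u$ along the outer boundary of $G_t$ (instead of the upper bound $u\le Kv(|\cdot|)$ that worked for $E^-$).

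Suppose $\sigma(E^+(u))>0$ for contradiction. Writing $E^+(u)=\bigcup_n F_n^+$ with $F_n^+=\{\zeta\in S : u(r\zeta)\ge v(r)/n\text{ for all }r\ge 1-1/n\}$, pick $n$ with $\sigma(F_n^+)>0$ and a compact $F\subset F_n^+$ of positive measure. Lemma \ref{l:n-cone} (with $c=1/n$) supplies $\tau=\tau(K,D,1/n)>0$ such that $u(y)\ge v(r)/(2n)$ whenever $|y|=r\ge 1-1/n$ and $|y-r\zeta|<\tau(1-r)$ for some $\zeta\in F$.

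The geometric heart of the proof is the choice of the cone aperture $a$. Parameterizing $y\in\Gamma^a_\zeta$ as $y=\lambda\zeta+(1-\lambda)z$ with $z\in a\overline{\B}$, a short computation gives $|y|^2-(y\cdot\zeta)^2=(1-\lambda)^2a^2(1-a^2)$ on the tangent part of the lateral surface of $\Gamma^a_\zeta$, and an optimization of $y\cdot\zeta$ over the top cap $\Gamma^a_\zeta\cap tS$ yields an analogous bound. Both computations show that the ratio $|y-|y|\zeta|/(1-|y|)$ tends to $a/\sqrt{1-a^2}$ as $|y|\to 1$. Fix $a>0$ with $a/\sqrt{1-a^2}<\tau/4$ and then $s_0\in(1-1/n,1)$ so that this ratio is $<\tau/2$ on both pieces whenever $|y|\ge s_0$. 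Lemma \ref{l:n-cone} applied at $|y|\zeta$ for an appropriate $\zeta\in F$ then gives $u(y)\ge v(|y|)/(2n)\ge 0$ on $L\setminus L_{s_0}$ and $u(y)\ge v(t)/(2n)$ on the top face $T_t:=tS\cap\overline{G}$ for every $t\ge s_0$, where $G=G(F,a)$.

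Let $M=\sup_{|x|\le s_0}|u(x)|<\infty$. For $t\in(s_0,1)$, decompose $\partial G_t=L_{s_0}\cup(L_t\setminus L_{s_0})\cup T_t$ (up to an $\omega$-nullset) and write $u$ via its harmonic-measure representation in $G_t$:
\begin{equation*}
u(0)=\int_{\partial G_t}u(y)\,d\omega(0,y,G_t)\ge -M\,\omega(L_{s_0})+0+\frac{v(t)}{2n}\,\omega(T_t).
\end{equation*}
Dahlberg's theorem (Theorem A) yields $\omega(0,F,G)\ge c>0$, and using $tG\subset G_t$ (as in the preceding proof) together with $tF\subset T_t$ we get $\omega(0,T_t,G_t)\ge\omega(0,tF,G_t)\ge\omega(0,F,G)\ge c$. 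Therefore $u(0)\ge -M+v(t)c/(2n)\to+\infty$ as $t\to 1^-$, contradicting the finiteness of $u(0)$. The delicate step is the geometric estimate: the crude non-tangential bound $|y-t\zeta|\lesssim (1-t)/(1-a)$ on the top cap is not small enough, but the careful optimization sharpens it to $|y-t\zeta|\sim a(1-t)/\sqrt{1-a^2}$, which becomes $<\tau(1-t)/2$ once $a$ is chosen sufficiently small.
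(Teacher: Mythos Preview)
Your argument is correct, and it is genuinely different from the paper's proof. The paper does \emph{not} run the cone-domain harmonic-measure argument a second time; instead it observes that Lemma~\ref{l:n-cone} makes $u$ bounded below in a cone $\Gamma^a_\zeta$ for every $\zeta\in E^+(u)$, invokes Carleson's theorem on non-tangential limits of harmonic functions bounded on one side to conclude that $u$ has a finite non-tangential limit at almost every point of $E^+(u)$, and then applies Lemma~\ref{l:n-cone} again to see that this limit would have to be $+\infty$. Your route avoids the appeal to Carleson's theorem entirely, using only Dahlberg's estimate (Theorem~A) already employed in Proposition~1; the price is the additional geometric work needed to choose the aperture $a$ so small that every point on $L\setminus L_{s_0}$ and on $T_t$ lies within $\tau(1-|y|)$ of a radial point $|y|\zeta$ with $\zeta\in F$, which you carry out correctly (the key computation $|y|^2-(y\cdot\zeta)^2=(1-\lambda)^2a^2(1-a^2)$ on the tangent part of $\partial\Gamma^a_\zeta$ and the analogous sharpening on the top cap are exactly what is needed). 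Thus your proof is somewhat longer but more self-contained, while the paper's is shorter at the cost of quoting a substantial external result.
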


\begin{proof}
Note that by Lemma \ref{l:n-cone} $u$ is bounded from below in $\Gamma_\z^a$ for any $\z\in E^+(u)$ and some $a=a(\z)$. Then by results of L. Carleson \cite{Ca} (see also \cite{HW, D}), $u$ has finite non-tangential limit at almost each point of $E^+(u)$. Applying the lemma once again, we see that the non-tangential limit at $\z\in E^+(u)$ is infinite. Thus $\sigma (E^+ (u))=0$.  
\end{proof}


\subsection{Estimates of Hausdorff measures}
For weights that satisfy the doubling condition we can give more precise estimates of the size of exceptional set. We now prove Theorem \ref{th:haus} (a) formulated in the introduction. 

\begin{proof}[ Proof of Theorem  \ref{th:haus} (a)]
We start with $E^+(u)$. It is enough to prove the statement for each set  
\[E_n=\left\{\z \in S: u(r\z)\ge\frac1n v(r), r\ge 1-\frac1n\right\}.\]
By Lemma \ref{l:n-cone}, there exists $a$ such that 
$u(x)\ge\frac1{2n}v(|x |)$ for any $x\in\Gamma^a_\z$ where $|x|>1-\frac1n$ and  $\z \in E^+(u)$. 

Let $G=\cup_{\z\in E_n}\Gamma^a_\z$ and $G_t=G\cap t\B$. Clearly we may assume that $u\ge c_0$ on $G$ for some $c_0<0$. Let $b $ be such that 
\[
\partial G_t\cap tS=tE_n^{b(1-t)}=\{t\z \in tS: |\z-\z_0|<b(1-t) \ {\rm where}\ \z_0\in E_n\},\]
here $b=b(a)$.
Then by 
harmonic measure estimate for $G_t$ when $t>1-\frac1n$, we obtain
\[
u(0)\ge c_0+\omega(0, tE_n^{b(1-t)}, G_t) \frac{v(t)}{2n}.\]
By  Theorem A 
there exists $C$ and $\gamma>0$ that depend only on $a$ such that
\[\omega(0, A, G_t)\ge C\sigma (A)^\gamma,\] here $\sigma $ is the $(m - 1) $-dimensional surface 
measure on  $tS$.
This implies
\[
\sigma (E_n^{b(1-t)})^\gamma\le \frac{C_1}{v(t)}\]
where $C_1 =C_1 (n,u,a) $.
So for all $\epsilon>0$ small enough we get by applying (\ref{vdouble}) 
\[\sigma(E_n^\epsilon)\le C_2 \left(v\left(1-\frac{\epsilon}{b}\right)\right)^{-1/\gamma}\le C_3  \left(v(1-\epsilon)\right)^{-1/\gamma}.\]
We cover $E_n $ by a finite collection of balls $\{  B_j:{j\in J}\} $ of radius $\frac{\epsilon}{5} $ and centers at points in $E_n $. By the Vitali covering lemma (see for example \cite[p.\,2]{He}) there exists a subcollection $J' \subseteq J$ where $\{B_j:j \in J'\} $ are disjoint and 
$\cup_{j \in J} B_j \subseteq \cup_{j \in J'} 5B_j$, and we also have $\cup_{j \in J'} 5B_j  \subseteq E_n^\epsilon $. 
Then $E_n $ can be covered by $N_\epsilon $ balls $\{5B_j:j\in J'\} $ of radius $\epsilon $, where
$$  \epsilon^{m-1} N_\epsilon \le 5^{m-1} \sigma(E_n^\epsilon),$$
thus
$$N_\epsilon \le 5^{m-1} \epsilon^{-m+1} C_3 \left( v(1-\epsilon)\right)^{-1/\gamma}.  $$
Then 
$$ \mathcal{H}_\lambda (E_n) \le\liminf_{\epsilon \rightarrow 0} N_\epsilon \lambda (\epsilon) \le\liminf_{\epsilon \rightarrow 0}5^{m-1} \epsilon^{-m+1} C_3 \left( v(1-\epsilon)\right)^{-1/\gamma} \lambda (\epsilon).  $$
Since $\lambda(t)=o(t^{m -1}(v(1-t))^w), (t\rightarrow 0)$, for any $w>0 $, we get $\mathcal{H}_\lambda(E_n)=0.$
 
The proof for $\mathcal{H}_\lambda(E^-(u))$ is similar;  we then use the second part of Lemma \ref{l:n-cone}.
\end{proof}

\begin{remark}
If $g (x) =x^{\gamma} $ for $\gamma>0 $ and $u\in\Phi^m_v $, then the theorem above implies in particular that $\mathcal{H}_\lambda(E^+(u))=0$ and $\mathcal{H}_\lambda(E^-(u))=0$ when $\lambda (t) =t^{m -1}\log \fraction1t $. On the other hand, we will show in section \ref{construction} that for any $\epsilon >0 $ there exists $u\in\Phi^m_v $ such that $$ \mathrm{dim}\, E^+ (u)>m -1 -\epsilon. $$
\end{remark}

\subsection{Auxiliary functions} 

We now begin to prove Theorem \ref{th:haus} (b).  First we construct auxiliary functions $u_k $ in $B$ that resemble $\Im(z^{2^k})$ in the unit disk.

For each positive integer $k$ let $S_k$ and $T_k$ be subsets of the interval $[0,2\pi)$ defined by
\[S_k=\cup_{j=0}^{2^k-1}[2j\pi2^{-k},(2j+1)\pi2^{-k}),\]
\[T_k=\cup_{j=0}^{2^k-1}[(2j+1/4)\pi2^{-k}, (2j+3/4)\pi2^{-k}].\]
 Then on the unit sphere $\Sm$ in $\R^m$ we define
\[E_k=\{\eta\in\Sm, \eta=(t\cos \phi, t\sin \phi,\eta_3,...,\eta_m), t\ge 0, \phi\in S_k\},\- {\rm{and}}\]
\[F_k=\{\eta\in\Sm, \eta=(t\cos \phi, t\sin \phi,\eta_3,...,\eta_m), t\ge 3/4, \phi\in T_k\}.\]
Let 
$f_k=1$ on $E_k$ and $f_k=-1$ on $\Sm\setminus E_k$. Further, let $u_k=P\ast f_k$ be the corresponding harmonic function in the unit ball $\Bm$.

\begin{lemma}
\label{l:ex1}
The function $u_k$ has the following properties
\begin{enumerate}
\item[(a)] $|u_k|\le 1$ on $\Bm$;
\item[(b)] $u_k(r\eta)\ge 0$ when $\eta\in E_k$;
\item[(c)] For each $d\in \N$ there exists $c_{d,m}$ such that $|u_k(x)|\le c_{d,m}2^{-kd}(1-|x|)^{-d}$;
\item[(d)] There exists $a_m$ such that $u_k(r\eta)>1/4$ when $\eta\in F_k$, and $|x|>1-a_m2^{-k}$. 
\end{enumerate}
\end{lemma}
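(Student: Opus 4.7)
Throughout set $\alpha=\pi/2^k$. Part (a) is immediate: since $|f_k|\le 1$ and $\int_S P(x,\zeta)\,d\sigma=1$, we have $|u_k(x)|\le 1$. For part (b) I will use a family of reflection symmetries of $f_k$. For each integer $l$, let $R_{l\alpha}$ be the reflection of $\R^m$ across the hyperplane $H_{l\alpha}$ spanned by the $(x_3,\dots,x_m)$-coordinate subspace and the line $\{\phi_x=l\alpha\}$ in the $(x_1,x_2)$-plane. The identity $\sin(2^k(2l\alpha-\phi))=-\sin(2^k\phi)$ gives $f_k\circ R_{l\alpha}=-f_k$ on $S$, hence by uniqueness of the Poisson extension $u_k\circ R_{l\alpha}=-u_k$ on $\B$, so $u_k$ vanishes on every $H_{l\alpha}$. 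These $2^k$ distinct hyperplanes cut $\B$ into $2^{k+1}$ open sectors, and on each sector $u_k$ is harmonic, vanishes on the flat parts of the boundary, and equals $\pm 1$ on the spherical part according to whether the sector's angular range is a good or bad interval of $S_k$. For $\eta\in E_k$ the point $r\eta$ lies in the closure of a sector whose spherical face has $f_k\equiv+1$, so the maximum principle gives $u_k(r\eta)\ge 0$.

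For part (c) the plan is to iterate an integration-by-parts along the rotation $R$ by $\alpha$ in the $(x_1,x_2)$-plane. Because $f_k\circ R=-f_k$ and $R$ preserves $\sigma$, a change of variables yields
\[
u_k(x)=\int_S P(x,\zeta)f_k(\zeta)\,d\sigma=-\int_S P(x,R\zeta)f_k(\zeta)\,d\sigma,
\]
so $u_k(x)=\tfrac12\int_S\left(P(x,\zeta)-P(x,R\zeta)\right)f_k\,d\sigma$. Applying the same identity to $Dg(\zeta):=g(\zeta)-g(R\zeta)$ in place of $g$, after $j$ iterations one obtains $u_k(x)=2^{-j}\int_S D^jP(x,\cdot)(\zeta)\,f_k(\zeta)\,d\sigma$. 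Writing the $j$-th finite difference as a $j$-fold integral of $\partial_\phi^j P$ (with $\partial_\phi$ the generator of rotations in the $(x_1,x_2)$-plane) and using Fubini with rotational invariance of $\sigma$ gives $\int_S|D^jP(x,\cdot)|\,d\sigma\le\alpha^j\int_S|\partial_\phi^jP(x,\zeta)|\,d\sigma(\zeta)$. Since $P(\cdot,\zeta)$ is positive and harmonic in $x$, Harnack's inequality on $B_{(1-|x|)/2}(x)$ combined with interior gradient estimates for harmonic functions gives $|\partial_x^\beta P(x,\zeta)|\le C_{|\beta|,m}(1-|x|)^{-|\beta|}P(x,\zeta)$. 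Replacing $\partial_\phi$ on the $\zeta$-variable by minus the corresponding derivative on $x$ (rotational invariance of $P$) and integrating in $\zeta$ yields $\int_S|\partial_\phi^jP(x,\cdot)|\,d\sigma\le C_{j,m}(1-|x|)^{-j}$. Chaining all of this gives $|u_k(x)|\le c_{d,m}2^{-kd}(1-|x|)^{-d}$ for every $d\in\N$; the delicate point is this final $L^1$ derivative bound, but once it is in hand the rest is bookkeeping.

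For part (d) I plan to estimate directly the Poisson mass of the bad set $S\setminus E_k$ at the point $r\eta$ and then use $u_k(r\eta)=1-2\int_{S\setminus E_k}P(r\eta,\cdot)\,d\sigma$. By construction $\phi_\eta$ lies at angular distance at least $\alpha/4$ from $S_k^c$, and since $t_\eta\ge 3/4$ a short case-analysis on whether $t_\zeta\ge 1/2$ or not shows that $|\eta-\zeta|\ge c_m\alpha$ for every $\zeta\in S\setminus E_k$. A standard layered Poisson kernel estimate, in the spirit of those derived in Section~2.1, then yields $\int_{|\eta-\zeta|\ge\rho}P(r\eta,\zeta)\,d\sigma\le C_m(1-r)/\rho$, so with $\rho=c_m\alpha$ and $1-|x|\le a_m2^{-k}$ this integral is bounded by a constant multiple of $a_m$, which is strictly less than $3/8$ once $a_m$ is chosen small enough depending on $m$. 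Consequently $u_k(r\eta)>1/4$ on the required range of radii.
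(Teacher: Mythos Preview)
Your argument is correct. Parts (a), (b), and (d) coincide with the paper's approach: the paper obtains the vanishing of $u_k$ on the hyperplanes by combining the reflection $x_2\mapsto -x_2$ with the rotation $A_k$ rather than using the reflections $R_{l\alpha}$ directly, but this produces the same family of hyperplanes and the same maximum-principle argument in each sector; for (d) both the paper and you show that a geodesic ball $B(\eta,c\,2^{-k})$ lies inside $E_k$ and then bound the Poisson mass of its complement by $3/8$.

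For part (c) you take a genuinely different route. The paper, after writing $u_k$ as a $d$-th finite difference of the kernel along the rotation orbit, controls this difference \emph{pointwise} by Taylor-expanding $y\mapsto|x-y|^{-m}$ in the angular variable, obtaining a bound of the form $c_{d,m}2^{-dk}(|x-y|-d\alpha_k)^{-m-d}$, which is then integrated over $E_k$; the regime $1-|x|\le 2d\alpha_k$ is handled separately via $|u_k|\le 1$. You instead bound the $L^1(S)$-norm of the $d$-th rotational derivative of the Poisson kernel: the identity $\partial_\phi^\zeta P=-\partial_\phi^x P$ (from rotational invariance) together with the Harnack/interior-gradient estimate $|\partial_x^\beta P(x,\zeta)|\le C_{|\beta|,m}(1-|x|)^{-|\beta|}P(x,\zeta)$ for the positive harmonic function $P(\cdot,\zeta)$ gives $\int_S|(\partial_\phi)^dP(x,\cdot)|\,d\sigma\le C_{d,m}(1-|x|)^{-d}$ directly, and the rotational invariance of $\sigma$ lets you pass from the $d$-fold integral of $\partial_\phi^d P$ to this $L^1$ bound without any pointwise control. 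This avoids the explicit Taylor remainder computation and the case split, and uses only the positivity, harmonicity and joint rotational invariance of $P$; the paper's argument, by contrast, gives a pointwise estimate on the integrand that could be useful if finer information were needed.
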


\begin{proof}
By the maximum principle 
(a) follows immediately. 
Note further that $f_k(x_1,x_2,...,x_m)=-f_k(x_1,-x_2,...,x_m)$, and thus
\[u_k(x)=\frac{1}{\gamma_{m -1}}\int_{\Sm}\frac{1-|x|^2}{|x-y|^{m}}f_k(y)dy\]
satisfies $u_k(x_1,x_2,...,x_m)=-u_k(x_1,-x_2,...,x_m)$. In particular,
\beq
\label{eq:ex1}
u_k(x_1,0,...,x_m)=0.
\eeq
 
Let $\alpha_k=\pi 2^{-k}$ and
\[A_k=\left[\begin{array}{ccccc}
\cos\alpha_k&-\sin\alpha_k&0&\cdots&0\\
\sin\alpha_k&\cos\alpha_k&0&\cdots&0\\
0&0\ &\ &\ &\\
\vdots&\vdots&\ &I_{m-2}&\ \\
0&0\ &\ &\ &\\
\end{array}\right],\]
where $I_{m -2} $ is the identity matrix.
Then $A_k$ is an orthogonal matrix and the corresponding transformation of $\R^m$ maps the unit sphere to itself, moreover $A_k(E_k)=\Sm\setminus E_k$. Then 
\[f_k(A_kx)=-f_k(x)\quad{\rm{and}}\quad
u_k((A_k)^{-1}x)=-u_k(x).\]
 Now, taking into account (\ref{eq:ex1}), we get
\[u_k(s\cos l\alpha_k, s\sin l\alpha_k,x_3,...,x_m)=0\]
for any $l=0,1,..., 2^{k+1}-1$.
Fix $l$ and consider the set
\[ G_{k,l}=\{x\in \Bm, x=(s\cos\phi, s\sin\phi,x_3,...,x_m), \phi\in(l\alpha_k,(l+1)\alpha_k)\}.\]
The boundary of $G_{k,l}$ consists of a part of the unit sphere and of subsets of the hyperplanes
\[\{(\sin l\alpha_k)x_1-(\cos l\alpha_k)x_2=0\}\quad\mathrm{ and } \quad\{(\sin (l+1)\alpha_k)x_1-(\cos (l+1)\alpha_k) x_2=0\}.\]
 On both subsets of the hyperplanes $u_k=0$, and on the corresponding part of the sphere  all boundary values of $u_k$ equal $1$ if $l$ is even and $-1$ if $l$ is odd. Anyway, $u_k$ does not change sign in $G_{k,l}$ and (b) follows.

To prove (c) assume first that $d=1$. We write
\begin{multline*}
u_k(x)=\frac{1}{\gamma_{m -1}}\int_{E_k}\frac{1-|x|^2}{|x-y|^m}dy-\frac{1}{\gamma_{m -1}}
\int_{\Sm\setminus E_k}\frac{1-|x|^2}{|x-y|^m}dy=\\
\frac{1}{\gamma_{m -1}}\int_{E_k}\left(\frac{1-|x|^2}{|x-y|^m}-\frac{1-|x|^2}{|x-A_ky|^m}\right)dy.
\end{multline*}
We want to estimate the difference under the integral sign. Note 
that \[\max_{y\in\Bm}|y-A_ky|=2\sin \alpha_k/2<\alpha_k\] and assume that $1-|x|>\alpha_k$, then
\[
\left|\frac{1}{|x-y|^m}-\frac{1}{|x-A_ky|^m}\right|\le \frac{m|y-A_ky|(|x-A_ky|+\alpha_k)^{m-1}}{|x-y|^{m}|x-A_ky|^m}\le \frac{m\alpha_k 2^{m-1}}{(1-|x|)|x-y|^m}.\]
We obtain $|u_k(x)|\le c_m\pi 2^{-k}(1-|x|)^{-1}$ when $1-|x|>\alpha_k$, otherwise the inequality follows from (a).

In general, we  write
\[
u_k(x)=\frac{1-|x|^2}{\gamma_{m -1}}2^{-d +1}\int_{E_k}\sum_{j=0}^d(-1)^j{d \choose j}f_x(A_k^jy)dy,
\]
where $f_x(y)=|x-y|^{-m}$. To estimate the sum under the integral sign let \[y=(y_0\cos\psi,y_0\sin\psi,y_1)\in\R\times\R\times\R^{m-2}.\]
We have $A_k^jy=(y_0\cos(\psi+j\alpha_k),y_0\sin(\psi+j\alpha_k),y_1)$ and $f_x(y)=h_{x,y_0,y_1}(\psi)$.
 Then  we write the Taylor polynomial of order $d-1$ and estimate the $d$th derivative of $h_{x,y_0,y_1}$. Finally, applying the difference relation
 \[
 \sum_{j=0}^d(-1)^j{d \choose j} j^l=0,\]
 when $l <d$ (see for example \cite [p.\,42] {V}), we get that
\[
\left|\sum_{j=0}^d(-1)^j{d \choose j}f_x(A_k^jy)\right|\le c_{d,m}2^{-dk}(|x-y|-d\alpha_k)^{-m-d}\le c_{d,m}2^{-dk} |x -y |^{-m} (1 - |x |)^{-d}.\]
Then for  $1-|x|>2d\alpha_k$, we have
\[
u_k(x)\le c_{d,m}2^{-dk}(1-|x|)^{-d},\]
and the same can be shown for $1-|x|<2d\alpha_k$ since $|u_k |\le 1 $, so (c) follows.

Finally, we prove (d). Let  $\eta\in F_k$ and $x=r\eta$. It is easy to check that $B(\eta, 2^{-k-1})\subset E_k$. A direct calculation shows that for $a_m$ small enough
\[\frac{1}{\gamma_{m -1}}\int_{\Sm\setminus B(\eta, 2^{-k-1})}\frac{1-r^2}{|r\eta-y|^m}dy<\frac 3 8,\]
when $r>1-a_m2^{-k}$.  Thus $u_k (x)>1 -2\fraction38 =\fraction14 $.
\end{proof}
It will be more convenient to use functions like $\Re(z^{2^k})$, so we define 
\[
h_k(x)=u_k((A_{k+1})x);\quad B_k=\cup_{j=0}^{2^k-1}[(2j-1/4)\pi2^{-k}, (2j+1/4)\pi2^{-k}].\] 
It is easy to check that (d) implies $h_k(r\eta)>1/4$ when 
\beq
\label{eq:last}
\eta\in H_k=\{y\in S, y=(t\cos\phi, t\sin\phi,y_3,...,y_m), t\ge 3/4, \phi\in B_k\}
\eeq 
and $r>1-a_m2^{-k}$.

\subsection{Construction of $u\in\Psi^m_v $ with a large set of radial growth}
\label{construction}

Now we can prove Theorem \ref{th:haus} (b).

\begin{proof}[Proof of Theorem \ref{th:haus} (b)]
First we construct $\nu_\beta(t)=O(tv(1-t)^\beta)$. 
The assumption on $v $ implies
$$\frac{d}2 v \left( 1 -\frac{d}2\right)^\alpha \leq \frac{d}2 D^\alpha v (1 -d)^\alpha <\fraction34 d v (1-d)^\alpha $$
 when $\alpha \le \alpha_0$. 
 
 For simplicity we define a new function $g $ such that $v(r)=g(\frac1{1-r})$. We will keep this notation throughout the paper. Then \eqref{vdouble} is equivalent to
\beq
\label{gdouble}
 g (2x) \le D g (x).  
 \eeq
  
  We choose $\alpha\le \min\{\alpha_0,  \beta\}$, and
 define  $\nu_\beta $ by 
 $$\nu_\beta (\pi2^{- n}) = \pi2^{-n} v (1 -2^{-n})^\alpha=\pi2^{-n} g(2^{n})^\alpha, \qquad n\ge 2 ,$$
  and $\nu_\beta $ is linear on $[\pi2^{-n-1},\pi2^{-n} ]$. 
   Then $\lim_{t \rightarrow 0} \nu_\beta (t) =0 $ and $\nu_\beta $ is continuous and increasing.   
For $t \in [\pi2^{-n -1},\pi2^{-n}	) $ we have
\begin{eqnarray*}
\frac{\nu_\beta (t)}{t g (\fraction1t)^\beta} &\le&  \frac{\pi2^{-n} g(2^{n})^\alpha}{\pi2^{-n-1}g (2^{n}/\pi)^\beta} = 2\frac{g (2^{n})^\alpha}{g (2^{n}/\pi)^\beta} 
\le 2\frac{g (2^{n})^\alpha}{g (2^{n-2})^\beta} \le 2D^{2\beta}\frac{g (2^{n})^\alpha}{g (2^{n})^\beta} \le 2D^{2\beta} 
\end{eqnarray*}
when $n\ge n_0 $, so $\nu_\beta (t)=O (t v (1 -t)^\beta) $ when $t \rightarrow 0 $. We also define a new function $\lambda_\beta (t) =t^{m -2}\nu_\beta (t)$.

Fix $A_1>1$ and define $b_1=1$,
\[
b_{n+1}=\min\{l: g(2^l)>A_1g(2^{b_n})\},\quad n=2,3,... .\]
By assumption, $g(2^{b_{n +1}}) \leq Dg (2^{b_{n +1} -1} )$, and by the way  the $b_n $'s are defined, 
$g (2^{b_{n +1} -1}) \leq A_1 g (2^{b_n}) $.  Then
\begin{equation}
\label{A_2}
g (2^{b_{n+1}}) \leq DA_1 g (2^{b_n})=A_2 g(2^{b_n}).
\end{equation}

Let
\[
u(x)=\sum_{n=1}^\infty g(2^{b_n})h_{b_n}(x).\]
We want to check that $u$ converges uniformly on compact subsets of $\Bm$ and $u\in\Phi^m_v$. 
Since $g $ fulfills \eqref{gdouble}, there exists $\gamma $  such that
\beq
\label{eq:ex2}
\frac{g(2^{l_2})}{g(2^{l_1})}\le 2^{\gamma(l_2-l_1)}
\eeq
for  $l_1,l_2\in\N $, just let $\gamma =\log_2 D $. 
Choose $d>\gamma $ and note that (\ref{eq:ex2}) implies
\beq
\label{eq:ex3}
\frac {g(2^{b_{n+1}}) 2^{-b_{n+1}d}}{g(2^{b_n})2^{-b_n d}}\le 2^{-(d-\gamma)(b_{n+1}-b_n)}\le 2^{-(d-\gamma)},\eeq
when $n>n_0$.
Assume that $1-2^{-b_N}<|x|<1-2^{-b_{N+1}}$, then by Lemma \ref{l:ex1} (a) and (c),
\[
|u(x)|\le \sum_{n=1}^Ng(2^{b_n})+c_{d,m}\sum_{n=N+1}^\infty g(2^{b_n})2^{-b_nd}(1-|x|)^{-d}.\]
The first sum is bounded by $C_1g(2^{b_N})$, and for $N$ large enough (\ref{eq:ex3}) implies that the second sum is bounded by $C_2g(2^{b_{N+1}})2^{-b_{N+1}d}(1-|x|)^{-d}\le C_2g(2^{b_{N+1}}).$
Then by (\ref{A_2})
\[
|u(x)|\le C_3 g(2^{b_{N+1}})\le C_4g(2^{b_N})\le C_4g\left(\frac1{1-|x|}\right).\]

Finally, we show that $F=\cap_{n} H_{b_n}\subset E^+(u)$, where $H_k$ are defined by (\ref{eq:last}). Let $x=|x|\eta,\, \eta\in F\subset\Sm$,
and $1-a_m2^{-b_N}<|x|\le 1-a_m2^{-b_{N+1}}$, where $a_m$ is as in Lemma \ref{l:ex1}; we may assume also that $x_1^2+x_2^2>1/4$. Then by  (b) and (d) in Lemma \ref{l:ex1} (see also the definition of $h_k$ above), we obtain
\[
u(x)=\sum_{n=1}^{\infty}g(2^{b_n})h_{b_n}(x)\ge g(2^{b_N})h_{b_N}(x)\ge\frac{1}{4} g(2^{b_N})\ge C_5g\left(\frac1{1-|x|}\right).\]

Let $C=\cap_{n}B_{b_n}\subset [0,2\pi)$ and $C_j=\cap_{n=1}^j B_{b_n}$.  
Then $C_j$ is  a  union of $N_j$ intervals of length $l_j=\frac{\pi}{4}2^{-b_j}$, where some of the intervals are next to each other, and $C $ is a set as in Lemma A. 
Intervals of length $l_j $ are called 
intervals from $j$-th generation. Each 
  of them contains $k_{j+1}$ intervals from the next generation.  It is easy to show that  $k_{j+1} =1 $ if $b_{j+1} -b_{j} =1 $, and  $k_{j +1} = \frac14 2^{b_{j+1}-b_j}$ if $b_{j+1} -b_{j} >1 $.  So $k_{j +1} \ge \frac14 2^{b_{j+1}-b_j}$ and 
  $N_j \ge (\frac{1}4)^j 2^{b_{j}}$.  
  
Let $0 <l \le \frac{\pi}{4} $ and pick $t $ and $j $ in $\N $ such that $l_j \ge \frac{\pi}{4}2^{-t}\ge l \ge \frac{\pi}{4}2^{-t -1}\ge l_{j+1}$. Then 
\begin{eqnarray*}
\frac{\nu_\beta(l)}{l}&\ge& \frac{\nu_\beta (\frac{\pi}{4}2^{-t-1})}{\frac{\pi}{4}2^{-t}}= \frac{\pi 2^{-t -3}g (2^{t +3})^\alpha}{\pi2^{-t -2}} 
\ge \fraction12g (2^{b_j+3})^\alpha 
\ge \fraction12g (2^{b_j})^\alpha \\
&\ge& \frac{1}{2A_2^\alpha}g (2^{b_{j+1}})^\alpha
\ge \frac{1}{2A_2^\alpha D^{2\alpha}}g \left(2^{b_{j+1}}4\right)^\alpha = \frac{1}{2A_2^\alpha D^{2\alpha} } \frac{\nu_\beta(l_{j+1})}{l_{j+1}}.
\end{eqnarray*}
Lemma A 
  with $\nu_\beta $ defined as above and $a = \frac{1}{2A_2^\alpha D^{2\alpha}} $ 
   now yields
\begin{eqnarray*}
\HH_{\nu_\beta}(C)&\ge&  \frac{a}{2}\liminf_{j\rightarrow\infty} N_j\nu_\beta(l_j) 
\ge \frac{a}{2}\liminf_{j\rightarrow\infty}  \left(\frac{1}4\right)^j 2^{b_{j}}\nu_\beta \left(\frac{\pi}{4}2^{-b_j}\right) \\
&= &\frac{a\pi}{8}\liminf_{j\rightarrow\infty}\left(\frac{1}4\right)^j g (2^{b_j +2})^\alpha 
\ge \frac{a\pi}{8}\liminf_{j\rightarrow\infty} \left(\frac{1}4\right)^j A_1^{j\alpha} g (2^{b_0})^\alpha.
\end{eqnarray*}
By choosing $A_1^\alpha>4$ we obtain $\HH_{\nu_\beta}(C)=\infty$.

Then by Lemma \ref{crossproduct} for $\lambda_\beta(t)=t^{m-2}\nu_\beta(t)$ and the remark on the behavior of the Hausdorff measure under the Lipschitz map, we have \[\HH_{\lambda_\beta}(E^+(u))\ge\HH_{\lambda_\beta}(F)>0.\] 
\end{proof}

\section{Positive harmonic functions}

\subsection{Proof of the Theorem \ref{th:pi} (a)} 

We will now consider extremal growth 
 on subsets of radii of the unit ball in $\R^m $ for positive functions. 
Let $v$ be a positive increasing continuous function on $[0,1) $ and assume $\lambda(t)=t^{m-1}v(1-t)$ is increasing.  Let $u $ be a positive harmonic function on $\Bm $ and let $F^+_v (u)$ be defined by (\ref{eq:F}).
For positive $u\in\Psi_v^m $, clearly $E^+ (u)\subset F^+_v (u) $. 
Theorem \ref{th:pi} is a generalization of Theorem 2 in \cite{BLMT}, where the result is proved for $v (r) =\log (\fraction1{1 -r})$ and $m =2 $.  
Note that we do no longer assume that $u\in\Psi_v^m $.


The proof of Theorem \ref{th:pi} (a) is similar to the one in 
\cite{BLMT}, but the proof of Lemma \ref{lem:1} is new. 

Let 
\[
F_{n}=\left\{          \z\in S: \limsup_{r\rightarrow 1}\frac{u(r\z)}{v(r)}\ge\frac 2 n
                 \right\}.
\]
It suffices to prove  that
$\HH_\lambda (F_n ) <\infty$ for all   $n$. 

Clearly $u=P*\mu$ for some positive Borel measure $\mu$ on $S$. Let $h:S\rightarrow [0,\pi] $ be given by $h (\cos\p,\z') =\p $ and define a measure on $[0,\pi] $ by $\nu=h_*\mu $, which means that $\nu(A)=\mu(h^{-1}(A))$ for any measurable set $A \subset [0,\pi] $. 
The formula $$\int_0^\pi f ( \psi)d\nu(\psi) =\int_0^\pi f' (\p)\nu ((\p,\pi])
d\p $$  is valid for $f\in C^1 [0,\pi] $ that is non-decreasing and fulfills $f (0) =0 $ and $f (t)>0 $ for $t>0 $ (see for example \cite[p.\,84]{St}). By using it with $f (\p) =\tilde{P}_{m,r} (0)-\tilde{P}_{m,r} (\p) $, we get the following integration by parts on $S$
\begin{gather*}
\int_\Sm P (rx,\z)d\mu(\z) =\int_\Sm \tilde{P}_{m,r} (h (\z))d\mu(\z) =\int_0^\pi \tilde{P}_{m,r} (\p) d\nu \\
=- \int_0^\pi \left (\tilde{P}_{m,r} (0) -\tilde{P}_{m,r}(\p)\right)  d\nu +\tilde{P}_{m,r} (0)\nu ([0,\pi])\\
=-\int_0^\pi Q_m (r,\p) \nu ((\p,\pi])d\p +\tilde{P}_{m,r} (0)\mu (\Sm) \\
=\tilde{P}_{m,r} (\pi)\mu (\Sm) +\int_0^\pi Q_m (r,\pi)\nu ((0,\p])d\p ,
\end{gather*}
thus
\beq
\label{pq}
\int_\Sm P (rx,\z)d\mu(\z) =\tilde{P}_{m,r} (\pi)\mu (S) +\int_{0}^\pi Q_m(r,\phi)\mu(\bar {B} (x,\phi))d\phi .
\eeq

We need the following lemma:

\begin{lemma}
\label{lem:1}
For each $n$  there exists  $k =k (m,n)>0 $ such that for any $x\in F_n$ there is a decreasing sequence
$\{\Delta_j\} $, $\Delta_j\rightarrow 0$ as $j\rightarrow \infty$, which  satisfies
\beq
\label{eq:20}
\mu(B (x,\Delta_j) )  \ge   k \sigma (B (x,\Delta_j) ) v (1 -\Delta_j).
\eeq
 \end{lemma}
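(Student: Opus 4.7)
The plan is to argue by contradiction. Suppose that for some $x \in F_n$ no such decreasing sequence $\{\Delta_j\}$ exists. Then there is $\Delta_0 > 0$ with
\[
\mu(\bar B(x,\Delta)) \le k\,\sigma(B(x,\Delta))\,v(1-\Delta) \qquad\text{for every } 0 < \Delta \le \Delta_0.
\]
I will use the representation (\ref{pq}) to show that under this hypothesis
\[
\limsup_{r\to 1}\frac{u(rx)}{v(r)} \le k\,C_m,
\]
for a constant $C_m$ depending only on $m$; then choosing $k = k(m,n)$ so that $kC_m < 2/n$ contradicts the definition of $F_n$, which forces the sequence to exist.

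For $r$ sufficiently close to $1$ that $1-r < \Delta_0$, I split the integral in (\ref{pq}) at $\phi = 1-r$ and at $\phi = \Delta_0$. On $(0,1-r]$ use monotonicity of $\phi \mapsto \mu(\bar B(x,\phi))$ to bound the integrand by $\mu(\bar B(x,1-r)) \le k C_2(1-r)^{m-1}v(r)$ (by the hypothesis together with (\ref{cap})), and then (\ref{q1}) yields a contribution at most $k C_2 C_4\, v(r)$. On $(1-r,\Delta_0]$, since $v$ is increasing we have $v(1-\phi) \le v(r)$, so the hypothesis gives $\mu(\bar B(x,\phi)) \le (kC_2/C_1)\,\sigma(B(x,\phi))\,v(r)$; inserting this and applying (\ref{q3}) (extended harmlessly to $\pi$) gives a contribution at most $(kC_2 C_9/C_1)\,v(r)$. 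Finally on $(\Delta_0,\pi]$, simply bound $\mu(\bar B(x,\phi)) \le \mu(S)$ and use (\ref{q2}) to obtain a contribution at most $\mu(S)\,C_6\,\Delta_0^{-m}$, which is $O(1)$ as $r \to 1$. The boundary term $\tilde P_{m,r}(\pi)\,\mu(S)$ also tends to $0$.

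Adding the three pieces and dividing by $v(r)$, which tends to $\infty$, yields
\[
\limsup_{r\to 1}\frac{u(rx)}{v(r)} \le k\!\left(C_2 C_4 + \frac{C_2 C_9}{C_1}\right),
\]
which contradicts $x \in F_n$ as soon as $k$ is chosen smaller than $\tfrac{2}{n}(C_2 C_4 + C_2 C_9/C_1)^{-1}$.

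The core of the argument is matching each of the three estimates on $Q_m$ from Section~2.1 to the region where it gives exactly the right power of $(1-r)$ (or $\sigma$-factor) to cancel the one produced by the hypothesized bound on $\mu$. The only delicate point is the middle region: there the trivial inequality $v(1-\phi) \le v(r)$ (valid precisely because $\phi \ge 1-r$) is what converts the $\phi$-dependent weight $v(1-\phi)$ into the factor $v(r)$ needed to match the other two pieces. Otherwise the proof is a calibration of constants, and no further structural input beyond the Poisson-representation formula (\ref{pq}) and the three $Q_m$-estimates is needed.
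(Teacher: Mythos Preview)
Your proof is correct and follows essentially the same route as the paper's: both argue by contradiction, invoke the representation \eqref{pq}, split the $\phi$-integral at $1-r$ and at an outer threshold, and feed the three pieces into the estimates \eqref{q1}, \eqref{q3}, and \eqref{q2} respectively. The only cosmetic differences are that on $(0,1-r]$ you use monotonicity of $\phi\mapsto\mu(\bar B(x,\phi))$ where the paper uses monotonicity of $t^{m-1}v(1-t)$, and you work with a fixed threshold $\Delta_0$ coming from the contradiction hypothesis rather than the paper's $r_j$-dependent cutoffs $d_j$; neither change affects the substance (and your extra factor $C_2/C_1$ in the middle region is unnecessary but harmless).
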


Suppose this lemma is already proved. Let $K$ be a compact subset of $F_n$ and let $B_j =B (x_j,a_j) $, where $x_j\in K $ and $a_j <\epsilon $. 
For each $\epsilon >0$ we can cover $K$ with a finite collection of such balls $\{  B_j:{j\in J}\} $ 
which satisfy
 $\mu(B_j )\ge k\sigma (B_j ) v (1 -a_j)$. 
   By the Vitali covering lemma (see for example \cite[p.\,2]{He}) there exists a subcollection $J' \subseteq J$ where $\{B_j:j \in J'\} $ are disjoint and 
$\cup_{j \in J} B_j \subseteq \cup_{j \in J'} 5B_j$.
Using \eqref{cap}
and Lemma \ref{lem:1} we obtain
\begin{eqnarray*}
\sum_{j\in J'} \lambda(5a_j)
&\le& C\sum_{j\in J'}\sigma (B(x_j,5a_j)) v (1 -a_j)
\le C 5^{m -1}\sum_{j\in J'}\sigma (B_j ) v (1 -a_j)\\
&\le& C\frac {5^{m -1}} {k}\sum_j\mu(B_j
)\le C\frac {5^{m -1}} {k} \mu(S),
\end{eqnarray*}
which yields $ {\HH}_\lambda(K)\le C\frac {5^{m -1}} {k}\mu(S)$. Thus ${\HH}_\lambda(F_n)\le C\frac {5^{m -1}} {k}\mu(S)<\infty$.
 
\begin{proof}[Proof of Lemma \ref{lem:1}] Assume that $x = (1,0,...,0).  $
Then by \eqref{pq} and \eqref{q2},
\bas
u (rx) &=& 
   \int_{S} P(rx,\z)d\mu (\z)\le
      \mu(S)  +   \int_{0}^\pi \mu(B (x,\phi)) Q_m(r,\phi)d\phi   \\
 &\le& \mu(S)  +   
 \int_{0}^{d} \mu(B (x,\phi)) 
   Q_m(r,\phi)d\phi   +   \mu(S) \int_{d}^\pi  Q_m(r,\phi)d\phi  \\
 &\le& \mu(S)  + \int_{0}^{d} \mu(B (x,\phi))Q_m(r,\phi)d\phi +\mu(S)C_6 
  d^{-m}   
\eas
Let $k  < [(C_2 C_4 +C_9) 3n]^{-1}$, where the constants are from \eqref{cap}, \eqref{q1} and \eqref{q3}. 
For $x \in F_{n}$  there exists a sequence $\{r_j\}_1^\infty$ such that
  $r_j\nearrow 1$ and $u (r_j)>\fraction 1n v(r_j) $. We may assume that $r_j>\fraction12 $. Now choose $d_j =d_j(r_j,v) \ge 1 - r_j$ such that 
  $$C 
  d_j^{-m} <\fraction1{3n} v (r_j) $$ and $d_j\rightarrow 0 $ when $j\rightarrow \infty $.
 For $ j>j_0$, $\mu(S)<\fraction  1{3n} v(r_j) $. 
 Then 
 $$\int_{0}^{d_j} \mu(B (x,\phi))Q_m(r_j,\phi)d\phi>\fraction1 {3n} v (r_j). $$ We claim that this implies  that for any $j $ there exists $\Delta_j\in (0,d_j) $ such that 
 $$\mu(B (x,\Delta_j) )  \ge   k \sigma (B (x,\Delta_j) ) v (1 -\Delta_j) , $$
 and the lemma follows. 
 If not,  
 there exists $j $ such that 
 $$\mu(B (x,\p) )  <   k \sigma (B (x,\p) )v (1 -\p) $$ for any $\p\in (0,d_j)$. Using \eqref{cap} and the fact that $t^{m - 1} v (1 -t) $ is increasing, and then applying 
  \eqref{q1} and \eqref{q3}, we obtain
\begin{gather*}
\int_{0}^{d_j} \mu(B (x,\phi))Q_m(r_j,\phi)d\phi 
<k \int_{0}^{d_j} \sigma(B (x,\phi)) v (1 -\p) Q_m(r_j,\phi)d\phi \\
\le k C_2 (1 -r_j)^{m -1} v (r_j) \int_{0}^{1 -r_j}  Q_m(r_j,\phi)d\phi +k  v (r_j) \int_{1 -r_j}^{\pi} \sigma(B (x,\phi)) Q_m(r_j,\phi)d\phi \\
\le k\left ( C_2  (1 -r_j)^{m -1} v (r_j) C_4 \fraction1 {(1 -r_j)^{m -1}} + C_9 v (r_j)\right) < \fraction1 {3n}  v (r_j),
\end{gather*} 
and we have a contradiction.
\end{proof}

\begin{corollary} 
If $v(r) =(\fraction1{1 -r})^{\gamma} $ for $0\le\gamma< m -1 $, then for any positive harmonic function $u$, 
the Hausdorff dimension of $F_v^+ (u) $ is less than or equal to $ m -1 -\gamma$. 
\end{corollary}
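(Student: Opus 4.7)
The plan is to apply Theorem \ref{th:pi}(a) directly to the specific weight $v(r)=(1-r)^{-\gamma}$. The first step is to compute the associated gauge function: since $v(1-t)=t^{-\gamma}$, we have
\[
\lambda(t)=t^{m-1}v(1-t)=t^{m-1-\gamma}.
\]
I would then verify that the hypothesis of Theorem \ref{th:pi}(a) is satisfied, namely that $\lambda$ is increasing, continuous, and $\lambda(0)=0$. All three follow from the exponent $m-1-\gamma$ being strictly positive, which is precisely the assumption $\gamma<m-1$ (the case $\gamma=0$ is trivial).

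Next, I would invoke Theorem \ref{th:pi}(a) to conclude that $F_v^+(u)$ is a countable union of sets of finite $\mathcal{H}_\lambda$-measure. Since $\lambda(t)=t^{m-1-\gamma}$, the generalized Hausdorff measure $\mathcal{H}_\lambda$ coincides (up to the standard normalization) with the classical $(m-1-\gamma)$-dimensional Hausdorff measure $\mathcal{H}^{m-1-\gamma}$. So each piece in the decomposition $F_v^+(u)=\bigcup_n F_n$ satisfies $\mathcal{H}^{m-1-\gamma}(F_n)<\infty$.

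Finally, a set with finite $\mathcal{H}^s$ measure has Hausdorff dimension at most $s$, and Hausdorff dimension is stable under countable unions. Thus
\[
\dim_H F_v^+(u)\;\le\;\sup_n \dim_H F_n\;\le\;m-1-\gamma,
\]
which is the desired estimate. There is no real obstacle here — the corollary is a straightforward specialization of Theorem \ref{th:pi}(a), with the only content being the identification of the gauge $\lambda$ as a pure power and the passage from ``finite $\mathcal{H}_\lambda$ measure'' to ``dimension bound'' via a countable union.
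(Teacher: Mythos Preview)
Your proposal is correct and matches the paper's intent: the corollary is stated without proof precisely because it follows immediately from Theorem~\ref{th:pi}(a) upon identifying $\lambda(t)=t^{m-1-\gamma}$ with the classical $(m-1-\gamma)$-dimensional Hausdorff gauge and using countable stability of dimension. There is nothing to add.
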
 
We will show in section \ref{positive sharpness section} that this estimate is sharp, i.e.  there exists $u $ such that $$\dim F_v^+ (u) =m -1 -\gamma .  $$
\subsection{Measures that correspond to positive functions in $\Psi_v^m $} 
Let
\[
 \Theta^m_v=\{u:\Bm\rightarrow\R, \Delta u=0, 0 <u(x)\le Kv(|x|)\}.\]
 We want to characterize all functions in $\Theta^m_v$ by their corresponding measure on $\Sm $.
 
\begin{proposition}
\label{mu}
Suppose $v $ satisfies \eqref{vdouble} and let
$u (x) =\int_S P (x,\z)d\mu (\z) $ where $\mu $ is a positive Borel measure on $\Sm $. Then $u\in \Theta^m_v $ if and only if
\beq
\label{eq:mu}
\mu(B (x,\p))\le C\sigma (B (x,\p))g \left(\frac{\pi}{\p}\right)
\eeq
for each ball   $B (x,\p)\subset \Sm$.\end{proposition}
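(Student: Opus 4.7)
The plan is to handle the two implications of this characterization separately, drawing on the integration-by-parts identity \eqref{pq} together with the pointwise estimates of $Q_m $ from Section 2 and the doubling condition \eqref{gdouble}.

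For the necessity direction, suppose $u\in\Theta^m_v$. I would evaluate $u$ at the interior point $(1-\phi)x$ for $x\in S$ and small $\phi>0$. A direct computation of the squared distance
\[
|(1-\phi)x-\zeta|^2=\phi^2+2(1-\phi)\bigl(1-\cos d(x,\zeta)\bigr)\le 2\phi^2
\]
for $\zeta\in B(x,\phi)$, using $1-\cos\psi\le\psi^2/2$, yields a pointwise lower bound $P((1-\phi)x,\zeta)\ge c\phi^{1-m}$ on that cap. Since $\mu\ge 0$, integrating only over $B(x,\phi)$ gives $u((1-\phi)x)\ge c\phi^{1-m}\mu(B(x,\phi))$. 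On the other hand, by hypothesis $u((1-\phi)x)\le Kv(1-\phi)=Kg(1/\phi)\le Kg(\pi/\phi)$, because $g$ is increasing. Combining and recalling $\sigma(B(x,\phi))\ge C_1\phi^{m-1}$ from \eqref{cap} yields \eqref{eq:mu} for $\phi$ small. For $\phi$ bounded below by a positive constant, \eqref{eq:mu} follows trivially from $\mu(S)=u(0)<\infty$ after enlarging the constant.

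For the sufficiency direction, I would plug \eqref{eq:mu} into \eqref{pq}. The boundary term $\tilde P_{m,r}(\pi)\mu(S)$ is uniformly bounded since $\tilde P_{m,r}(\pi)\to 0$ as $r\to 1$. The remaining integral I would split at $\phi=1-r$. On $(0,1-r]$ use monotonicity of $\mu(B(x,\cdot))$ to pull out $\mu(B(x,1-r))\le C(1-r)^{m-1}g(\pi/(1-r))$ and then apply \eqref{q1}; by \eqref{gdouble} this yields a bound $Cg(\pi/(1-r))\le C'v(r)$. On $[1-r,\pi]$ I would decompose dyadically as $\phi\in I_k=[2^k(1-r),2^{k+1}(1-r)]$, use the sharper estimate $Q_m(r,\phi)\le C(1-r)\phi^{-m-1}$ (valid for $\phi\ge 1-r$, as in the derivation of \eqref{q2}) together with $\mu(B(x,\phi))\le C\phi^{m-1}g(\pi/\phi)$, and finally $g(\pi/\phi)\le g(\pi/(2^k(1-r)))\le Cv(r)$ using monotonicity of $g$ and \eqref{gdouble}. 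The contribution from $I_k$ then simplifies to $C2^{-k}v(r)$, and the geometric series sums to give $u(rx)\le K'v(r)$.

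The main obstacle is the $[1-r,\pi]$ part of the sufficiency half: a direct appeal to \eqref{q3} would only yield a bound involving $\sup g(\pi/\phi)$ without any compensating decay, and this would be too weak when $g$ grows rapidly toward $\phi=1-r$. The dyadic decomposition is needed precisely to pair each copy of $g(\pi/(2^k(1-r)))$ with the geometric factor $2^{-k}$ arising from the sharper pointwise bound on $Q_m$, and the doubling condition is exactly what allows each $g(\pi/(2^k(1-r)))$ to be absorbed into a constant multiple of $v(r)$ uniformly in $k$.
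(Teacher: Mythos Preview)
Your proof is correct, and the necessity direction matches the paper's almost exactly (the paper evaluates at $(1-\phi/\pi)x$ rather than $(1-\phi)x$, a cosmetic difference).

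For sufficiency, however, the paper takes a shorter route than your dyadic decomposition, and your stated reason for avoiding \eqref{q3} is mistaken. On $[1-r,\pi]$ the function $g(\pi/\phi)$ is \emph{decreasing} in $\phi$ (since $g$ is increasing), so $\sup_{\phi\in[1-r,\pi]} g(\pi/\phi)=g(\pi/(1-r))$, and by \eqref{gdouble} this is at most $D^{2}v(r)$. Hence the paper simply writes
\[
\int_{1-r}^{\pi}\sigma(B(x,\phi))\,g\!\left(\frac{\pi}{\phi}\right)Q_m(r,\phi)\,d\phi
\le g\!\left(\frac{\pi}{1-r}\right)\int_{1-r}^{\pi}\sigma(B(x,\phi))Q_m(r,\phi)\,d\phi
\le C_9\,g\!\left(\frac{\pi}{1-r}\right),
\]
invoking \eqref{q3} directly. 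No ``compensating decay'' is needed: the worst value of $g(\pi/\phi)$ on this range is already the target bound. Your dyadic argument reproduces this outcome by pairing $g(\pi/(2^k(1-r)))\le Cv(r)$ with the geometric factor $2^{-k}$, which is fine but redundant---each dyadic block is already dominated by the first one, and their sum is controlled by exactly the quantity the paper pulls out in one step.
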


\begin{proof}
Assume $u\in\Phi^m_v $ and let $x\in S $. 
Then
\begin{eqnarray*}
v \left(1 -\fraction{\p}{\pi}\right)&\ge& \fraction1K u \left(\left(1 -\fraction{\p}{\pi}\right)x\right)  =   \frac1{K\gamma_{m -1}}\int_{S}\frac{1 - (1 -\fraction{\p}{\pi})^2}{|x (1 -\fraction{\p}{\pi})-\z |^m} d\mu (\z)\\
&\ge& \frac1{K\pi\gamma_{m -1}}\int_{B (x,\p)} \fraction{2\p - \p^2}{(2\p  )^{m}} d\mu (\z) \ge\frac1{K\pi\gamma_{m -1}}\fraction{\mu (B (x,\p))}{2^m \p^{m-1}}\\
&\ge& C \mu (B (x,\p)) \fraction1{\sigma (B (x,\p) )},
\end{eqnarray*}
thus $\mu(B (x,\p))\le C\sigma (B (x,\p))g \left(\frac{\pi}{\p}\right) $.

Conversely, suppose that (\ref{eq:mu}) is fulfilled.  Assume for simplicity that $x = (1,0,...,0) $. 
Then by \eqref{pq},
\begin{gather*} 
u (rx) =
   \int_{S} P(rx,\z)d\mu (\z)\le
      \mu(S)  +   \int_{0}^\pi \mu(B (x,\phi)) Q_m(r,\phi)d\phi   \\
    \le \mu(S)  +   \int_{0}^{1 -r} \mu(B (x,\phi)) Q_m(r,\phi)d\phi   +   \int_{1 - r}^\pi \mu(B (x,\phi)) Q_m(r,\phi)d\phi  \\
 \le \mu(S)  +   \mu(B (x,1 - r)) \int_{0}^{1 - r} 
   Q_m(r,\phi)d\phi  +   C\int_{1 - r}^\pi \sigma (B (x,\phi))g \left(\fraction\pi\phi \right) Q_m(r,\phi)d\phi  \\
 \le \mu(S)  +  C\sigma (B (x,1 - r))g \left(\fraction\pi{1 - r}\right)  \int_{0}^{1 - r} Q_m(r,\phi)d\phi  \\
 +  \, Cg \left(\fraction\pi{1 - r}\right) \int_{1 - r}^\pi \sigma (B (x,\phi))Q_m(r,\phi)d\phi.  
\end{gather*} 
Furthermore, by \eqref{q1} and \eqref{q3}, $u (rx)\le \tilde {C} g (\fraction\pi{1 - r}) \le\tilde {C}_1 v (r)$.

\end{proof}

 
\subsection{Proof of Theorem \ref{th:pi} (b)}\label{positive sharpness section}


First, note that $v $ satisfies \eqref{vdouble}, in fact 
\[v\left(1 -\fraction{t}2\right) = \fraction{2^{m -1}\lambda \left(\fraction {t}2\right)} {t^{m -1}}< \fraction{2^{m -1}\lambda (t)}{t^{m -1}}=2^{m -1} v (1 -t).\]

Consider the set $A =[0,\pi]\times ...\times [0,\pi]\times [0,2\pi]$ and the hyperspherical coordinates on $S$, i.e., we consider the function $f: A\rightarrow S$ defined by 
$$f (\phi_1,..,\phi_{m-1}) = (\cos \phi_1,\sin \phi_1 \cos \phi_2,..., 
\sin \phi_1 ...\sin \phi_{m -1}). $$
This function is bilipschitz on $[1,2]^{m -1} $. We will use a Cantor-type construction to get a set $C\subset [1,2]^{m -1}\subset A $. We first construct a set in $[1,2] $. 

Let $F_0 =[1,2] $. Define $d_k $  as 
\[d_{k}=\min \{n \in \N: g(2^n)\ge 2^{(m -1)k}\},\quad k=2,3,...\]
Then $d_{k+1}>d_k $ for all $k $. By (\ref{gdouble}) we also have
\beq
\label{d}
\fraction{g (2^{d_k})} {g (2^{d_{k -1}})}\le \fraction{Dg (2^{d_k -1})} {g (2^{d_{k -1}})}\le \fraction{D2^{(m -1)k}} {2^{(m -1) (k -1)}}=D2^{m -1} =\delta_m .
\eeq
We construct by induction sets $F_k\subset F_{k-1}$ such that $F_k$ consists of $n_k =2^{d_k-k }$ closed intervals of  
length $2^{-d_k}$ each. To obtain $F_k$ we divide each of the intervals of $F_{k-1}$ into $2^{ d_k -d_{k-1}}$ equal subintervals and choose each second 
of them for $F_k$. Now let $C_k = F_k\times ...\times F_k\subset [1,2]^{m -1} $.  The number of squares in $C_k $ is $N_k =2^{(d_k-k)(m -1) }$. Let also $C =\cap C_k $.

Let $\nu_k $ be the measures defined by $d\nu_k=2^{(m -1)k}\chi(C_k)dy$ on $[1,2]^{m -1} $, where $\chi(C_k)$  is the characteristic function of $C_k$. We also define the measures $\mu_k =f_*\nu_k$ 
on $S $. 
Denote 
$G_k =f (C_k) $ and 
 $G =f (C) $, clearly $G =\cap G_k $.

\begin{lemma}
\label{lem:4}
The sequence  $\{\mu_k\}$ converges $\ast $-weakly to a measure $\mu$ and $u=P*\mu\in\Theta^m_v$.  
\end{lemma}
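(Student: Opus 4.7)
My plan is to apply Proposition \ref{mu} to the candidate limit measure $\mu$, after first establishing weak-$*$ convergence of $\{\mu_k\}$. A direct count shows that each $\nu_k$ is a probability measure on $[1,2]^{m-1}$: its total mass is $2^{(m-1)k}\cdot N_k\cdot 2^{-(m-1)d_k}=1$. Moreover the construction is consistent across scales, since every cell of $C_l$ has $\nu_k$-mass exactly $1/N_l$ for all $k\ge l$. A standard uniform-continuity argument (splitting the integral according to the cells of $C_l$, letting $k\to\infty$ first and then $l\to\infty$) then shows $\int\varphi\,d\nu_k$ is Cauchy for every $\varphi\in C([1,2]^{m-1})$. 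Hence $\nu_k$ converges weakly to a probability measure $\nu$ supported on $C$, and since $f$ is continuous, $\mu_k=f_*\nu_k$ converges weakly to $\mu:=f_*\nu$ on $S$.

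To invoke Proposition \ref{mu} and conclude $u=P*\mu\in \Theta^m_v$, I have to verify the size condition
$$
\mu(B(x,\phi))\le C\sigma(B(x,\phi))\,g\!\left(\frac{\pi}{\phi}\right)
$$
for every spherical cap $B(x,\phi)\subset S$ (the strict positivity of $u$ is automatic, since $\mu$ is a nontrivial positive measure and $P>0$). Because $f$ is bilipschitz on $[1,2]^{m-1}$, the pullback $f^{-1}(B(x,\phi))\cap [1,2]^{m-1}$ is contained in a cube $Q$ of side $\ell=O(\phi)$. It therefore suffices to prove the purely Euclidean estimate $\nu(Q)\le C\ell^{m-1}g(1/\ell)$ for every such cube.

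The core of the argument is this cube estimate. Large $\ell$ is trivial, since $\nu$ is a probability measure and $g(1/\ell)$ stays bounded below on that range. For small $\ell$ I would locate the unique generation $k$ with $2^{-d_{k+1}}\le\ell<2^{-d_k}$. A cube of side $\ell$ meets at most $C_m(\ell\cdot 2^{d_{k+1}})^{m-1}$ cells of $C_{k+1}$, and each such cell carries $\nu$-mass $1/N_{k+1}=2^{-(m-1)(d_{k+1}-(k+1))}$, so
$$
\nu(Q)\le C_m\,\ell^{m-1}\cdot 2^{(m-1)(k+1)}.
$$
By the definition of $d_k$ together with $1/\ell>2^{d_k}$, we have $2^{(m-1)k}\le g(2^{d_k})\le g(1/\ell)$, yielding $\nu(Q)\le C\ell^{m-1}g(1/\ell)$. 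Combining with \eqref{cap} and using the doubling \eqref{gdouble} of $g$ to absorb the bilipschitz constants then gives exactly the bound required by \eqref{eq:mu}.

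The main obstacle I foresee is matching the scale $\ell$ of the test cube to the Cantor scale: the sandwich $2^{-d_{k+1}}\le\ell<2^{-d_k}$ together with the minimality in the definition of $d_k$ (coupled with the ratio bound \eqref{d}) is exactly what links the combinatorial count of cells intersecting $Q$ to the growth of $g$. Once this is set up, Proposition \ref{mu} closes the argument.
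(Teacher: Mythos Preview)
Your proposal is correct and follows essentially the same route as the paper: establish weak-$*$ convergence of $\nu_k$ (hence of $\mu_k=f_*\nu_k$), reduce the membership $u\in\Theta^m_v$ to the cap estimate of Proposition~\ref{mu}, transfer this via the bilipschitz map $f$ to a Euclidean cube estimate for $\nu$, and verify the latter by counting Cantor cells at the scale determined by $2^{-d_{k+1}}\le\ell<2^{-d_k}$. The paper carries out the identical computation (with the index shifted by one), using \eqref{d} where you use the definition of $d_k$ directly; both arguments are equivalent.
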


\begin{proof}
The $\ast $-weak convergence of $\{\mu_k\} $ follows from the $\ast $-weak convergence of $\{\nu_k\} $, which we will prove now. 
Note that $\nu_k(C_0)=1$ for each $k$. Let $\{J_i\}_{i =1}^{N_k} $ be the squares of $C_k $. For each square $J_i $ 
the limit $\nu_k(J_i)$ as  $k\to \infty$ exists because
all values   $\nu_k(J_i)$  are the same  when $k>s$. For squares in $S\setminus C_k $ the limit will be $0 $. Now each continuous function on $[1,2]^{m -1}$ can be uniformly approximated by  linear combinations of characteristic functions of small squares. Thus for each continuous function $f$ on $[1,2]^{m -1} $ there exists
\[
\lim_{k\rightarrow\infty}\int_{[1,2]^{m -1}} fd\nu_k
\]
and $\nu_k$ converge weakly to some positive measure $\nu$. 

By Proposition \ref{mu} 
it suffices to check that
 \[
 \mu(B (x,r))\le C\sigma (B (x,r))g \left(\frac{\pi}{r}\right)
 \]
for each ball   $B (x,r)\subset \Sm$, in order to prove that $u= P*\mu \in \Theta^m_v$.
This is true if a similar estimate is true for $\nu $.

 Let $y\in [1,2]^{m -1}$  and 
 let $B_e(y,r) $ be a Euclidean ball. 
  Choose $s$ such that $2^{-d_{s}} <r\le  2^{-d_{s-1}} $.
 Now take a square 
 $Q \supset B_e (y,r)$ that is a union of dyadic cubes with side lengths $2^{-d_s} $, 
 and let the side length of $Q$ be $2^{-d_s}l $ for some $l\in \N $ such that $2^{-d_s}l <4r $.  
 Then $|Q |<A|B_e (y,r)|$ where $A=A (m) $. By using \eqref{d}, we obtain
 \bas
\nu_k(B_e (y,r))&\le& \nu_k(Q)=\nu_s(Q)\le 2^{(m -1)s}|Q|<A|B_e (y,r)|g\left(2^{d_s} \right) \\
&\le& A\delta_m |B_e (y,r)|g\left(2^{d_{s -1}} \right) 
\le A\delta_m |B_e (y,r)|g\left(\fraction1{r}\right),
\eas
which is the desired inequality. 
\end{proof}

To finish the proof of Theorem \ref{th:pi} (b) we will show that
$  G \subset E^+(v)$ 
and $\HH_\lambda(G)>0$. 
We have
 \[
u(rx)=\int_{S} P(rx,\z)d\mu(\z)\ge\int_0^\pi \mu(B (x,\phi)) Q_m(r,\phi)d\phi.\]
Let  $x\in \cap G_k=G$ and  $r\in [0,1) $. Choose $k_0 $ such that $2^{(m -1) (k_0-1)}\le g(\fraction1{1-r})< 2^{(m -1)k_0}$. 
Then  it is not difficult to see that
 $\mu(B (x,\phi))\ge c2^{(m -1)k_0}\phi^{m -1}$ for $\phi<1-r$ and
 \bas
u(rx) &\ge& \int_0^{1-r} \mu(B (x,\phi)) Q_m(rx,\phi)d\phi\\
 &\ge& c2^{(m -1)k_0}\int_0^{1-r}\phi^{m -1}  Q_m(rx,\phi)d\phi\ge c_12^{(m -1)k_0}\ge c_1 g \left(\fraction1 {1 -r}\right),
\eas
when $r>r_0$. Thus $G=\cap G_k\subset E^+(u)$.

Finally we use Lemma B to estimate $\HH_{\lambda}(C) $. We have
\begin{eqnarray*}
\liminf_{k\rightarrow\infty} N_k\lambda(l_k)
=\liminf_{k\rightarrow\infty} 2^{d_k -k}2^{- d_k} g \left( 2^{d_k}\right)
\ge \liminf_{k\rightarrow\infty}\pi 2^{-k+1} 2^{k}= c>0.
\end{eqnarray*}
Then $\HH_{\lambda}(C)>0 $ and $\HH_{\lambda}(G)\ge c_1\HH_{\lambda}(C)>0 $ since $C =f^{-1} (G) $ and $f^{-1} $ is Lipschitz. We use also that 
$$\lambda \left(\fraction{t}2\right) = \left(\fraction{t}2\right)^{m -1} v \left(1 -\fraction{t}2\right)\ge t^{m -1} v (1 -t)2^{-m +1} =2^{-m +1} \lambda (t), $$ see Section \ref{preliminaries haus}.


\bibliographystyle{plain}
\bibliography{generalcase3}

\end{document}